\numberwithin{equation}{section}
\newtheorem{thm}{Theorem}[section]
\newtheorem{lem}[thm]{Lemma}
\newtheorem{pro}[thm]{Proposition}
\newtheorem{cor}[thm]{Corollary}
\newtheorem{claim}{Claim}
\theoremstyle{definition}
\theoremstyle{remark}
\newtheorem*{notation}{Notation}
\newtheorem{rem}[thm]{Remark}
\newtheorem*{ack}{Acknowledgement}
\newcommand{\N}{\mathbb{N}}
\newcommand{\R}{\mathbb{R}}
\newcommand{\Z}{\mathbb{Z}}
\newcommand{\sph}{\mathbb{S}}
\newcommand{\Zp}{\Z_{{(p)}}}
\newcommand{\vcd}{{\operatorname{vcd}}}
\newcommand{\rank}{{\operatorname{rank}}}
\newcommand{\symm}{\mathfrak{S}}
\newcommand{\Aut}{\operatorname{Aut}}
\begin{document}
\title[$p$-local homology of Coxeter groups]{
A vanishing theorem for \\ the $p$-local homology of Coxeter groups}
\author[T. Akita]{Toshiyuki Akita}
\address{Department of Mathematics, Hokkaido University,
Sapporo, 060-0810 Japan}
\email{akita@math.sci.hokudai.ac.jp}
\subjclass[2010]{Primary~20F55, 20J06; Secondary~55N91}
\keywords{Coxeter groups, Group homology}

\begin{abstract}
Given an odd prime number $p$ and a Coxeter group $W$ such that
the order of the product $st$ is prime to $p$ for every Coxeter generators
$s,t$ of $W$, we prove that the $p$-local homology groups $H_k(W,\Zp)$
vanish for $1\leq k\leq 2(p-2)$.
This generalize a known vanishing result for
symmetric groups due to Minoru Nakaoka.
\end{abstract}

\maketitle

\section{Introduction}
Coxeter groups are important objects in many branches of mathematics,
such as Lie theory and representation theory, 
combinatorial and geometric group theory, topology and geometry.
Since the pioneering work of Serre \cite{serre}, Coxeter groups have
been studied in group cohomology as well. See the book by Davis \cite{davis-book}
and \S\ref{sec-known} of this paper for brief outlooks.
In this paper, we will study the $p$-local homology of Coxeter groups
for \emph{odd} prime numbers $p$.
For an arbitrary Coxeter group $W$,  its integral homology group $H_k(W,\Z)$ is
known to be a finite abelian group for all $k>0$, 
and hence it decomposes into a \emph{finite} direct sum
of $p$-local homology groups each of which is a finite abelian $p$-group:
\[
H_k(W,\Z)\cong\bigoplus_p H_k(W,\Zp).
\]

According to a result of Howlett \cite{howlett},
the first and second $p$-local homology groups, $H_1(W,\Zp)$ and $H_2(W,\Zp)$,
are trivial for every odd prime number $p$. 
On the other hand, the symmetric group of $n$ letters $\symm_n$ $(n\geq 2)$
is the Coxeter group of type $A_{n-1}$.
Much is known about the (co)homology of symmetric groups.
Most notably, in his famous two papers,
Nakaoka proved the homology stability for
symmetric groups \cite{nakaoka} and computed the stable mod $p$
homology \cite{nakaoka2}. As a consequence of his results,
$H_k(\symm_n,\Zp)$ vanishes
for $1\leq k\leq 2(p-2)$ (see Theorem \ref{nakaoka-result} below).
The purpose of this paper is to generalize vanishing of $H_k(\symm_n,\Zp)$ to all
Coxeter groups:
\begin{thm}
\label{main-thm}
Let $p$ be an odd prime number
and $W$ a $p$-free Coxeter group. Then $H_k(W,\Zp)=0$ holds for $1\leq k\leq 2(p-2)$.
\end{thm}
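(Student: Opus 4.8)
The plan is to reduce the statement, via the Davis complex, to the case of \emph{finite} Coxeter groups, and then to settle that case using the classification of finite Coxeter groups together with Nakaoka's computation, Theorem~\ref{nakaoka-result}. Write $2(p-2)=2p-4$ for the range in question. Since group homology commutes with filtered colimits and $W$ is the directed union of its standard parabolic subgroups $W_T$ with $T\subseteq S$ finite, each of which is again $p$-free, I may assume $|S|<\infty$. Only $p\ge 5$ needs genuine attention: for $p=3$ the range is $\{1,2\}$, and (as the argument below also shows) a $3$-free Coxeter group has no finite parabolic subgroup of order divisible by $3$ at all.

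Suppose first that $W$ is infinite with $|S|<\infty$. Let $\Sigma$ be the Davis complex of $(W,S)$, realized as the order complex of the poset of cosets $wW_T$ with $T\subseteq S$ spherical. Then $\Sigma$ is contractible, $W$ acts properly and simplicially with every simplex fixed pointwise by a conjugate of a finite parabolic subgroup $W_T$, $T\subsetneq S$, and $\Sigma/W$ is the order complex of the poset of spherical subsets of $S$, which contains $\emptyset$ and is therefore contractible. The isotropy spectral sequence of this action,
\[
E^1_{s,t}=\bigoplus_{[\sigma]\in\Sigma_s/W}H_t(W_\sigma;\Zp)\ \Longrightarrow\ H_{s+t}(W;\Zp),
\]
carries no orientation twists. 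Granting the theorem for the finite parabolics $W_\sigma$ (which is proved independently below, with no reference back to this case), the rows $1\le t\le 2p-4$ of $E^1$ vanish, while the row $t=0$ has homology $H_*(\Sigma/W;\Zp)$, concentrated in degree $0$. A short inspection of the diagonals of $E^2$ then forces $H_k(W;\Zp)=0$ for $1\le k\le 2p-4$. Thus the whole theorem reduces to the finite case.

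Now let $W$ be finite. By the Künneth theorem over the PID $\Zp$, if two groups have reduced $\Zp$-homology vanishing in degrees $\le 2p-4$, then so does their product: every surviving tensor-product or $\tor$ summand either involves $H_i$ of a factor with $1\le i\le 2p-4$, or is a $\tor$ against $H_0=\Zp$, which is flat. So one may take $W$ irreducible; if $p\nmid|W|$ there is nothing to prove. For type $A_{n-1}$, i.e.\ $W=\symm_n$, this is Theorem~\ref{nakaoka-result}. For types $B_n$ and $D_n$ one has $W\cong(\Z/2)^k\rtimes\symm_n$ with $k\in\{n-1,n\}$; since $p$ is odd, the Lyndon--Hochschild--Serre spectral sequence of the extension $(\Z/2)^k\to W\to\symm_n$ degenerates and gives $H_*(W;\Zp)\cong H_*(\symm_n;\Zp)$, reducing these to type $A$. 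For the remaining irreducible types, $p$-freeness forces $p$ not to divide any label $m_{st}$, which already implies $p\nmid|W|$ except for the pairs $(E_6,5),(E_7,5),(E_7,7),(E_8,5),(E_8,7)$. In each of these the Sylow $p$-subgroup $P$ of $W$ is elementary abelian --- of rank $1$, except rank $2$ for $(E_8,5)$ --- and equals the Sylow $p$-subgroup of a naturally embedded product of symmetric groups ($\symm_5$, $\symm_7$, or $\symm_5\times\symm_5$), whose normalizer already realizes a subgroup of $\Aut(P)$ large enough that, through the stable-element isomorphism $H_*(W;\Zp)\cong H_*(P;\Zp)^{N_W(P)/C_W(P)}$, the invariants vanish for $1\le k\le 2p-4$: when $P$ has rank $1$ the first nonzero class lies in degree $2(p-1)-1=2p-3$, and for $(E_8,5)$ the same bound comes from a short computation with $H_*((\Z/5)^2;\Z)$ under the diagonal torus of $\mathrm{GL}_2(\F_5)$. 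This settles the finite case, and hence the theorem.

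The step I expect to be the main obstacle is the last one: identifying, at each of the exceptional pairs $(E_6,5),(E_7,5),(E_7,7),(E_8,5),(E_8,7)$, the $p$-fusion of the Weyl group --- equivalently, the action of $N_W(P)/C_W(P)$ on $P$ --- precisely enough to see that it agrees with that of a product of symmetric groups, the rank-$2$ case $(E_8,5)$ additionally requiring the explicit module calculation indicated. A more routine but still delicate point is the bookkeeping in the Davis-complex spectral sequence, which is why I use the simplicial order-complex model of $\Sigma$ (cell stabilizers fixing cells pointwise, quotient manifestly contractible) rather than the polytopal model, where orientation characters would have to be tracked.
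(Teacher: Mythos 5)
Your proposal is correct in substance but takes a genuinely different route from the paper. The paper inducts on $\rank W$ using the \emph{Coxeter complex} $X_W$: its simplex stabilizers are arbitrary proper parabolics (possibly infinite), so an induction is unavoidable, and for finite $W$ one must compare $H^W_*(X_W,\Zp)$ with $H_*(W,\Zp)$ on the sphere $\sph^{|S|-1}$, killing the orientation character using the invertibility of $2$ in $\Zp$ (Proposition \ref{compare}). Your Davis-complex argument replaces all of this by a one-step reduction to finite parabolics (contractible total space, finite stabilizers, contractible quotient), which is cleaner but shifts the burden: you must prove the finite case \emph{in all ranks}, whereas the paper's induction only ever needs finite $p$-free groups of rank $\le 2(p-2)$ (Claim \ref{claim1}). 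Concretely this costs you two things. First, for type $A$ you must invoke Nakaoka's theorem for all $n$; the paper's Lemma \ref{case-symm} covers only $\symm_{n+1}$ with $n\le 2(p-1)$ (cyclic Sylow $p$-subgroup, Swan's lemma) and thereby yields an independent proof of Nakaoka's vanishing, which your route forfeits. Second, you acquire the extra exceptional pairs $(E_7,5)$ and $(E_8,5)$, which the paper's rank bound excludes. These are handled correctly in spirit, but two points of execution deserve attention: (i) the subgroup $\symm_5\times\symm_5$ of $W(E_8)$ containing a Sylow $5$-subgroup is not a standard parabolic --- it arises from the $A_4+A_4$ subsystem of $E_8$ (Borel--de Siebenthal) --- and its existence should be justified; (ii) the ``main obstacle'' you flag, determining the fusion $N_W(P)/C_W(P)$ exactly, is not actually needed: since $P$ is an abelian common Sylow subgroup of $H\le W$ with $H$ a product of symmetric groups, one has $N_H(P)\le N_W(P)$, hence $H_k(P,\Zp)^{N_W(P)}\subseteq H_k(P,\Zp)^{N_H(P)}\cong H_k(H,\Zp)=0$ in the range --- or, as the paper argues, the transfer $H_k(W,\Zp)\to H_k(P,\Zp)$ is injective and factors through $H_k(H,\Zp)$. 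Either way the rank-two ``module calculation'' for $(E_8,5)$ evaporates. The remaining steps (the filtered-colimit reduction, the K\"unneth argument, types $B$ and $D$ via the Lyndon--Hochschild--Serre spectral sequence, the identification of the bottom row of the isotropy spectral sequence with $H_*(\Sigma/W,\Zp)$, and the $p=3$ discussion) match the paper or are routine.
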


Here a Coxeter group $W$ is said to be $p$-free if  the order of the product $st$ is prime
to $p$ for every distinct Coxeter generators $s,t$ of $W$. 
We should remark that, for $p\geq 5$, the $p$-freeness assumption is
necessary 
and the vanishing range $1\leq k\leq 2(p-2)$ is best possible. 
The situation is somewhat different for $p=3$. See \S\ref{sec-3-free}.

The proof of Theorem \ref{main-thm} consists of two steps,
a case by case argument for finite $p$-free Coxeter groups 
with relatively small rank, and the induction on the number of generators.
The induction is made possible by means of the equivariant homology of
Coxeter complexes and the Leray spectral sequence converging to the
equivariant homology.
Now we will introduce the content of this paper very briefly.
In \S\ref{subsec-defn}, we will recall definitions and
relevant facts concerning of Coxeter groups.
Known results about homology of Coxeter groups and their consequences 
will be reviewed more precisely in \S\ref{sec-known}.
After the consideration of the equivariant homology of Coxeter complexes in \S\ref{sec-equivariant}.
the proof of Theorem \ref{main-thm} will be given in \S\ref{sec-proof}. 
The final section \S\ref{sec-vanishing} consists of miscellaneous results.
There we will give some classes of Coxeter groups such that all the $p$-local homology 
groups vanish.

\begin{notation}
Throughout this paper, $p$ is an \emph{odd} prime number unless otherwise stated.
$\Zp$ is the localization of $\Z$ at the prime $p$ (the ring of $p$-local integers).
For a finite abelian group $A$, its $p$-primary component is denoted by $A_{(p)}$.
Note that $A_{(p)}\cong A\otimes_{\Z}\Zp$.
For a group $G$ and a (left) $G$-module $M$, the co-invariant of $M$ is denoted by $M_G$
(see \cite{brown}*{II.2} for the definition). For a prime number $p\geq 2$, 
we denote the cyclic group of order $p$ and the field with $p$ elements 
by the same symbol $\Z/p$.
\end{notation}

\section{Preliminaries}\label{sec-defn}
\subsection{Coxeter groups}\label{subsec-defn}
We recall definitions and relevant facts concerning of Coxeter groups.
Basic references are \cites{book-building,bourbaki,davis-book,humphreys}.
See also \cite{MR777684} for finite Coxeter groups.
Let $S$ be a finite set and $m:S\times S\rightarrow \N\cup\{\infty\}$
a map satisfying the following conditions:
\begin{enumerate}
\item $m(s,s)=1$ for all $s\in S$
\item $2\leq m(s,t)=m(t,s)\leq\infty$ for all distinct $s,t\in S$.
\end{enumerate}
The map $m$ is represented by the \emph{Coxeter graph} $\Gamma$ whose
vertex set is $S$ and whose edges are the unordered pairs $\{s,t\}\subset S$
such that $m(s,t)\geq 3$. The edges with $m(s,t)\geq 4$ are labeled
by those numbers.
The \emph{Coxeter system} associated to $\Gamma$
is the pair $(W,S)$ where $W=W(\Gamma)$ is the group generated by $s\in S$
and the fundamental relations $(st)^{m(s,t)}=1$ $(m(s,t)<\infty)$:
\[
W:=\langle s\in S\ |\ (st)^{m(s,t)}=1 (m(s,t)<\infty)\rangle.
\]
The group $W$ is called the \emph{Coxeter group} of type $\Gamma$,
and elements of $S$ are called \emph{Coxeter generators} of $W$.
The cardinality of $S$ is called the \emph{rank} of $W$
and is denoted by $|S|$ or $\rank\,W$.
Note that the order of the product $st$ is precisely $m(s,t)$.
For a subset $T\subseteq S$ (possibly $T=\varnothing$), 
the subgroup $W_T:=\langle T\rangle$ of $W$ generated
by elements $t\in T$ is called a \emph{parabolic subgroup}.
In particular, $W_S=W$ and $W_\varnothing=\{1\}$.
It is known that $(W_T,T)$ is a Coxeter system.

A Coxeter group $W$ is called \emph{irreducible} if its defining graph
$\Gamma$ is connected,
otherwise called \emph{reducible}.
For a reducible Coxeter group $W(\Gamma)$ of type 
$\Gamma$, if $\Gamma$ consists of the connected components
$\Gamma_1,\Gamma_2,\dots,\Gamma_r$, then $W(\Gamma)$
is the direct product of parabolic subgroups
$W(\Gamma_i)$'s $(1\leq i\leq r)$, each of which is irreducible:
\[
W(\Gamma)=W(\Gamma_1)\times W(\Gamma_2)\times\cdots\times W(\Gamma_r).
\]

Coxeter graphs for finite irreducible Coxeter groups are classified.
There are four infinite families $A_n\, (n\geq 1)$, $B_n\, (n\geq 2)$, $D_n\, (n\geq 4)$,
$I_2(q)\, (q\geq 3)$, and six exceptional graphs $E_6,E_7,E_8,F_4,H_3$ and $H_4$. 
The subscript indicates the rank of the resulting Coxeter group.
See Appendix for the orders of finite irreducible Coxeter groups.
Here we follow the classification given in the book by Humphreys \cite{humphreys} 
and there are overlaps $A_2=I_2(3),B_2=I_2(4)$.
Note that $W(A_n)$ is isomorphic to the symmetric group of $n+1$ letters,
while $W(I_2(q))$ is isomorphic to the dihedral group of order $2q$.

Finally,  given an odd prime number $p$,
we define a Coxeter group $W$ to be \emph{$p$-free} if $m(s,t)$ is prime to $p$
for all $s,t\in S$. Here $\infty$ is prime to all prime numbers by the convention.
For example, the Coxeter group $W(I_2(q))$ is $p$-free if and only if
$q$ is prime to $p$, while the Coxeter group $W(A_n)$ $(n\geq 2)$
is $p$-free for $p\geq 5$.
For every finite irreducible Coxeter group $W$, 
the range of odd prime numbers $p$ such that $W$ is $p$-free can be found in Appendix.
Note that parabolic subgroups of $p$-free Coxeter groups are also $p$-free. 
Henceforth, we omit the reference to the Coxeter graph $\Gamma$
and the set of Coxeter generators $S$ 
if there is no ambiguity.

\subsection{Known results for homology of Coxeter groups}\label{sec-known}
In this subsection, we will review some of known results concerning
the homology of Coxeter groups which are related to our paper. 
A basic reference for (co)homology of groups is \cite{brown}.
In the beginning, 
Serre \cite{serre} proved that every Coxeter group $W$ has
finite virtual cohomological dimension and is a group of type WFL
(see \cite{brown}*{Chapter VIII} for definitions).
This implies, in particular, that $H_k(W,\Z)$ is a
finitely generated abelian group for all $k$.
On the other hand, the rational homology of any Coxeter groups
are known to be trivial (see \cite[Proposition 5.2]{a-euler} or \cite[Theorem 15.1.1]{davis-book}).
Combining these results, we obtain the following result:

\begin{pro}\label{finite-abel}
For any Coxeter groups $W$, the integral homology group
$H_k(W,\Z)$ is a finite abelian group for all $k>0$.
\end{pro}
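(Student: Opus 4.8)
The plan is to combine two inputs that are already at our disposal: Serre's finiteness theorem for Coxeter groups and the rational acyclicity of Coxeter groups. First I would invoke Serre's result from \cite{serre}, recalled above, that an arbitrary Coxeter group $W$ has finite virtual cohomological dimension and is of type WFL. As noted in the text, this forces $H_k(W,\Z)$ to be a finitely generated abelian group for every $k$. The mild subtlety worth spelling out here is the passage from ``type WFL'' to ``finitely generated homology in each degree'': choosing a finite-index subgroup $\Gamma\leq W$ of type FL, one obtains a finite resolution of $\Z$ by finitely generated free $\Z\Gamma$-modules, hence $H_*(\Gamma,\Z)$ is finitely generated in each degree; a corestriction--restriction (transfer) argument, using that $\Z[W/\Gamma]$ is a finitely generated free $\Z$-module, then propagates finite generation to $H_*(W,\Z)$ (cf.\ \cite{brown}*{Chapter VIII}).

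Next I would bring in the vanishing of rational homology: by \cite{a-euler}*{Proposition 5.2} (or \cite{davis-book}*{Theorem 15.1.1}), $H_k(W,\Q)=0$ for all $k>0$. Since $H_k(W,\Z)$ is finitely generated, the universal coefficient theorem yields a natural isomorphism $H_k(W,\Z)\otimes_{\Z}\Q\cong H_k(W,\Q)$, so $H_k(W,\Z)\otimes_{\Z}\Q=0$ for $k>0$; equivalently, $H_k(W,\Z)$ is a torsion group in those degrees. A finitely generated torsion abelian group is finite, so $H_k(W,\Z)$ is a finite abelian group for every $k>0$, which is the assertion of the proposition.

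I do not anticipate a genuine obstacle in this argument: the substantive content -- Serre's finiteness theorem on one side and the determination of the rational homology on the other -- is already established in the cited literature, and what remains is only the bookkeeping above, namely the extraction of degreewise finite generation from type WFL and a one-line application of the universal coefficient theorem together with the structure theorem for finitely generated abelian groups.
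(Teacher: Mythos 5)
Your proposal is correct and follows essentially the same route as the paper: Serre's theorem that $W$ has finite virtual cohomological dimension and is of type WFL gives finite generation of $H_k(W,\Z)$, rational acyclicity of Coxeter groups makes these groups torsion, and a finitely generated torsion abelian group is finite. The only caveat concerns your parenthetical transfer sketch: the identity $\tr\circ\res=[W:\Gamma]$ by itself only shows that $[W:\Gamma]\cdot H_k(W,\Z)$ is finitely generated (which does not imply $H_k(W,\Z)$ is), so for that step you should instead invoke the fact that type $\mathrm{FP}_\infty$ passes up along finite-index inclusions (\cite{brown}*{Chapter VIII}) or the equivariant homology spectral sequence of the contractible $W$-complex furnished by the WFL structure --- but this is precisely the point the paper likewise delegates to the references.
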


Consequently, 
$H_k(W,\Z)$ $(k>0)$
decomposes into a finite direct sum
\begin{equation}\label{direct-sum}
H_k(W,\Z)\cong\bigoplus_p H_k(W,\Z)_{(p)}
\end{equation}
where $p$ runs the finite set of prime numbers dividing the order of
$H_k(W,\Z)$.
The universal coefficient theorem implies 
\begin{equation}\label{univ-p-local}
H_k(W,\Z)_{(p)}\cong H_k(W,\Z)\otimes_\Z\Zp\cong H_k(W,\Zp)
\end{equation}
(see \cite[Corollary 2.3.3]{MR2378355}).
It turns out that the study of the integral homology groups of Coxeter groups reduces
to the study of the $p$-local homology groups.
Later, we will prove that $H_k(W,\Zp)=0$ $(k>0)$ if $W$ has
no $p$-torsion (Proposition \ref{p-tor-coho}).
The first and second integral homology of Coxeter groups are known.
\begin{pro}\label{low-homology}
For any Coxeter groups $W$, we have $H_1(W,\Z)\cong (\Z/2)^{n_1(W)}$
and $H_2(W,\Z)\cong (\Z/2)^{n_2(W)}$ for some non-negative integers
$n_1(W),n_2(W)$. 
\end{pro}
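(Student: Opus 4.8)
The plan is to handle the two groups separately: $H_1$ by an elementary computation of the abelianization, and $H_2$ by first reducing to a $2$-group and then invoking Howlett's Schur-multiplier computation.

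For $H_1$ I would use that $H_1(W,\Z)$ is canonically the abelianization $W/[W,W]$. Since $W$ is generated by the finite set $S$ and $s^2=1$ for each $s\in S$, the image of $s$ in $W/[W,W]$ has order dividing $2$; hence $W/[W,W]$ is generated by $|S|$ elements of order at most $2$, so it is a quotient of $(\Z/2)^{|S|}$ and therefore an elementary abelian $2$-group, say of rank $n_1(W)\leq |S|$. (If one wants to pin down $n_1(W)$: the relation $(st)^{m(s,t)}=1$ is automatic in $W/[W,W]$ when $m(s,t)$ is even and forces the images of $s$ and $t$ to coincide when $m(s,t)$ is odd, so $n_1(W)$ is the number of connected components of the subgraph of $\Gamma$ spanned by the odd-labelled edges; we do not need this.)

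For $H_2$ I would first observe that $H_2(W,\Z)$ is a finite abelian $2$-group: by Proposition \ref{finite-abel} it is finite, so by the decomposition \eqref{direct-sum} it splits as $\bigoplus_p H_2(W,\Z)_{(p)}$, and by Howlett's theorem \cite{howlett} the summand $H_2(W,\Z)_{(p)}\cong H_2(W,\Zp)$ vanishes for every odd prime $p$; hence $H_2(W,\Z)=H_2(W,\Z)_{(2)}$. The remaining claim, that this $2$-group is in fact \emph{elementary} abelian, is exactly the substance of Howlett's explicit determination of the Schur multiplier of a Coxeter group in \cite{howlett}, where $H_2(W,\Z)$ is computed as $(\Z/2)^{n_2(W)}$ with $n_2(W)$ read off combinatorially from $\Gamma$; I would simply cite that result.

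The main obstacle is precisely this elementary-abelian assertion for $H_2$. If one insisted on a proof independent of \cite{howlett}, the natural tool is Hopf's formula: with $F=F(S)$ the free group on $S$ and $R$ the normal closure in $F$ of $\{(st)^{m(s,t)} : m(s,t)<\infty\}$, one has $H_2(W,\Z)\cong (R\cap[F,F])/[F,R]$, and one would try to show every class is killed by $2$. The diagonal contribution of a single relator $(st)^{m(s,t)}$ is manageable, since the dihedral parabolic subgroups $W_{\{s,t\}}$ have $2$-torsion (indeed explicitly known) Schur multipliers; the hard part is controlling the interaction between relators attached to different pairs of generators, i.e.\ the off-diagonal part of the relation module. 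Carrying out that bookkeeping carefully is what Howlett's case analysis does, which is why deferring to \cite{howlett} is the efficient route.
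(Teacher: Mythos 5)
Your proposal is correct and follows essentially the same route as the paper: the paper likewise disposes of $H_1$ by noting that $W/[W,W]$ is generated by images of the involutions in $S$, and defers the $H_2$ statement entirely to Howlett \cite{howlett}. Your intermediate reduction of $H_2(W,\Z)$ to its $2$-primary part is harmless but redundant, since the elementary abelian structure you ultimately cite from \cite{howlett} already subsumes it.
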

The claim for $H_1(W,\Z)$ is obvious because $H_1(W,\Z)=W/[W,W]$ and
$W$ is generated by elements
of order $2$. The statement for $H_2(W,\Z)$ was proved by Howlett \cite{howlett}
(following earlier works by Ihara and Yokonuma \cite{ihara-yokonuma} and
Yokonuma \cite{yokonuma}).
The nonnegative integers $n_1(W),n_2(W)$ can be computed from the Coxeter graph
for $W$.
As for $n_1(W)$, let $\mathscr{G}_W$ be the graph whose vertices
set is $S$ and whose edges are unordered pair $\{s,t\}\subset S$ such that
$m(s,t)$ is a finite odd integer. Then it is easy to see that $n_1(W)$ agrees with
the number of connected components of $\mathscr{G}_W$.
In particular, $n_1(W)\geq 1$ and hence $H_1(W,\Z)=H_1(W,\Z_{(2)})\not=0$.
For the presentation of $n_2(W)$, see \cite{howlett}*{Theorem A} or \cite[\S8.11]{humphreys}.
As a consequence of Proposition \ref{low-homology}, we obtain the following result:

\begin{cor}\label{low-p-homology} 
Let $p$ be an odd prime number.
For any Coxeter groups $W$, we have
$H_1(W,\Zp)=H_2(W,\Zp)=0$.
\end{cor}

The corollary does not hold for the third homology or higher.
Indeed, for the Coxeter group $W(I_2(q))$ of type $I_2(q)$, which is isomorphic to
the dihedral group of order $2q$ as mentioned before,
it can be proved that, if $p$ divides $q$, then
$H_k(W(I_2(q)),\Zp)\not=0$ whenever $k\equiv 3\pmod{4}$
(see \cite{MR1877725}*{Theorem 2.1} and \S\ref{sec:aspherical} below).
This observation also shows the necessity of the $p$-freeness assumption in our results
for $p\geq 5$.
Finally, we will recall a consequence of results of Nakaoka 
\cites{nakaoka,nakaoka2} which was mentioned in the introduction.

\begin{thm}[Nakaoka \cites{nakaoka,nakaoka2}]\label{nakaoka-result}
Let $\symm_n$ be the symmetric group of $n$ letters. Then
$H_k(\symm_n,\Zp)=0$ $(1\leq k\leq 2(p-2))$ for all odd prime numbers $p$.
\end{thm}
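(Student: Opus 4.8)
The plan is to deduce Theorem~\ref{nakaoka-result} from the two cited papers of Nakaoka: the homology stability theorem of \cite{nakaoka} and the computation of the stable mod $p$ homology in \cite{nakaoka2}. Write $\symm_\infty=\bigcup_{n\geq 1}\symm_n$ for the infinite symmetric group, where $\symm_n\subset\symm_{n+1}$ is the stabilizer of the last letter. The first step is to reduce to the stable group: Nakaoka's decomposition theorem exhibits $H_k(\symm_n;\Z/p)$ as a direct summand of $H_k(\symm_{n+1};\Z/p)$ for every $n$ and $k$ (and the stabilization map is an isomorphism once $n$ is large), so passing to the colimit yields a split injection $H_k(\symm_n;\Z/p)\hookrightarrow H_k(\symm_\infty;\Z/p)$ valid for \emph{all} $n$. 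Hence it suffices to show $H_k(\symm_\infty;\Z/p)=0$ for $1\leq k\leq 2(p-2)$, with no appeal to the precise stable range.

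The second step is to read off this vanishing from Nakaoka's description of $H_*(\symm_\infty;\Z/p)$: it is the free graded-commutative $\Z/p$-algebra on the classes obtained by applying admissible iterated Dyer--Lashof operations of positive excess to the generator of $H_0$. For an odd prime $p$ the operation $Q^s$ raises degree by $2s(p-1)$, while its Bockstein $\beta Q^s$ raises degree by $2s(p-1)-1$; the algebra generator of smallest positive degree is therefore $\beta Q^1$ of the fundamental class, which sits in degree $2(p-1)-1=2p-3$. Consequently $H_k(\symm_\infty;\Z/p)=0$ for $1\leq k\leq 2p-4=2(p-2)$, and $H_{2p-3}(\symm_\infty;\Z/p)\neq 0$, so the range is best possible. (As a sanity check, one can instead compute $H_*(\symm_p;\Z/p)=H_*(\Z/p;\Z/p)^{\Z/(p-1)}$ directly and see that its lowest positive-degree class sits in degree $2p-3$; alternatively one may use the Barratt--Priddy--Quillen identification $B\symm_\infty^{+}\simeq\Omega^\infty_0\sph$ together with Serre's mod-$\mathcal C$ Hurewicz theorem and the absence of $p$-torsion in $\pi^s_k$ for $0<k<2p-3$.)

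Combining the two steps gives $H_k(\symm_n;\Z/p)=0$ for every $n$ and every $k$ with $1\leq k\leq 2(p-2)$. To pass to $\Zp$-coefficients, note that $\symm_n$ is finite, so $H_k(\symm_n;\Z)$ is a finite abelian group for $k>0$ and its $p$-primary part $H_k(\symm_n;\Z)_{(p)}$ is a finite $p$-group; the universal coefficient monomorphism $H_k(\symm_n;\Z)\otimes\Z/p\hookrightarrow H_k(\symm_n;\Z/p)$ forces $H_k(\symm_n;\Z)_{(p)}\otimes\Z/p=0$, whence $H_k(\symm_n;\Z)_{(p)}=0$. Therefore $H_k(\symm_n;\Zp)\cong H_k(\symm_n;\Z)_{(p)}=0$ in the asserted range, as claimed.

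The only genuine input is the second step: one must locate, inside Nakaoka's stable computation, the lowest-degree nonvanishing reduced mod $p$ homology class of $\symm_\infty$ and verify that it occurs precisely in degree $2p-3$ (equivalently, identify the smallest-degree surviving Dyer--Lashof monomial). The stability reduction of the first step and the universal-coefficient bookkeeping of the third step are purely formal.
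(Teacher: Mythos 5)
Your proposal is correct and follows essentially the same route as the paper: injectivity of the stabilization maps from \cite{nakaoka} reduces the claim to $\symm_\infty$, the vanishing of $H_k(\symm_\infty,\Z/p)$ for $1\leq k\leq 2(p-2)$ is read off from the stable computation in \cite{nakaoka2}, and the universal coefficient theorem converts mod $p$ vanishing into $\Zp$-vanishing. The extra detail you supply (the Dyer--Lashof description locating the first class in degree $2p-3$, and the finite-$p$-group argument in the coefficient step) only makes explicit what the paper leaves implicit.
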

\begin{proof}
In his paper \cite{nakaoka}, Nakaoka proved the homology stability for
symmetric groups.
Namely, for $2\leq m\leq n\leq\infty$, the homomorphism 
$H_k(\symm_m,A)\rightarrow H_k(\symm_n,A)$ induced by
the natural inclusion $\symm_m\hookrightarrow\symm_n$ is injective for all $k$, 
and is an isomorphism if $k<(m+1)/2$, where $A$ is an abelian group with the trivial
$\symm_n$-action, and $\symm_\infty$ is the infinite symmetric group 
\cite[Theorem 5.8 and Corollary 6.7]{nakaoka}.
He also computed the mod $p$ homology of $\symm_\infty$ in
\cite[Theorem 7.1]{nakaoka2}, from which we deduce
that $H_k(\symm_\infty,\Z/p)=0$ for 
$1\leq k\leq 2(p-2)$ and that $H_{2p-3}(\symm_\infty,\Z/p)\not=0$.
Combining these results, we see that $H_k(\symm_n,\Z/p)=0$
$(1\leq k\leq 2(p-2))$ for all $n$.
Applying the universal coefficient theorem, the theorem follows.
\end{proof}

Theorem \ref{main-thm}, together with Corollary \ref{low-p-homology} for $p=3$,
generalize Theorem \ref{nakaoka-result} to all Coxeter groups.
For further results concerning of (co)homology of Coxeter groups,
we refer the book by Davis \cite{davis-book} and
papers \cites{MR1334713,MR1603123,akita-quillen,MR1764318,MR1446573,pride-stohr,MR1877725,MR2709085} 
as well as references therein.

\section{Coxeter complexes and their equivariant homology}\label{sec-equivariant}
\subsection{Coxeter complexes}\label{complex}
We recall the definition and properties of Coxeter complexes
which are relevant to prove Theorem \ref{main-thm}.
A basic reference for Coxeter complexes is \cite{book-building}*{Chapter 3}.
Given a Coxeter group $W$, the \emph{Coxeter complex} 
$X_W$ of $W$ is the poset of
cosets $wW_T$ $(w\in W, T\subsetneq S)$,
ordered by reverse inclusion. 
It is known that $X_W$ is actually an $(|S|-1)$-dimensional
simplicial complex (see \cite[Theorem 3.5]{book-building}). 
The $k$-simplices of $X_W$ are the cosets $wW_T$ with $k=|S|-|T|-1$.
A coset $wW_T$ is a face of $w'W_{T'}$ if and only if $wW_T\supseteq w'W_{T'}$.
In particular, the vertices are cosets of the form $wW_{S\setminus\{s\}}$
$(s\in S,w\in W)$, 
the maximal simplices are the singletons $wW_\varnothing=\{w\}$ $(w\in W)$,
and the codimension one simplices are cosets of the form
$wW_{\{s\}}=\{w,ws\}$ $(s\in S, w\in W)$.
In what follows, we will not distinguish between $X_W$ and its
geometric realization.

There is a simplicial action of $W$ on $X_W$ by left translation 
$w'\cdot wW_T:=w'wW_T$. The isotropy subgroup of a simplex $wW_T$ is
precisely $wW_T w^{-1}$, which
fixes $wW_T$ pointwise.
Next, consider the subcomplex $\Delta_W=\{W_T\ |\ T\subsetneq S\}$ of $X_W$,
which consists of a single $(|S|-1)$-simplex $W_\varnothing$ and its faces.
Since the \emph{type function}  $X_W\rightarrow S$,
$wW_T\mapsto S\setminus T$ is well-defined
(see \cite[Definition 3.6]{book-building}), $\Delta_W$ forms the
set of representatives of $W$-orbits of simplices of $X_W$.
The following fact is well-known.
\begin{pro}
If $W$ is a finite Coxeter group, then $X_W$ is a triangulation of the
$(|S|-1)$-dimensional sphere $\sph^{|S|-1}$.
\end{pro}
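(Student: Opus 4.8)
The plan is to realize the Coxeter complex $X_W$ as the cell decomposition of a round sphere cut out by the reflecting hyperplanes of the canonical linear action of $W$. Since $W$ is finite, the symmetric bilinear form $B$ on $V:=\R^{|S|}$ determined on the standard basis $\{e_s\}_{s\in S}$ by $B(e_s,e_t)=-\cos\bigl(\pi/m(s,t)\bigr)$ is positive definite --- one of the standard characterizations of finiteness in the classification of Coxeter groups (see \cite{humphreys}*{Ch.~6} or \cite{bourbaki}). Hence the canonical representation realizes $W$ faithfully as a finite orthogonal reflection group on the Euclidean space $(V,B)$, generated by the reflections in the hyperplanes $H_s:=e_s^{\perp}$, and since the $e_s$ form a basis of $V$ the closed fundamental chamber $\overline{C}:=\{\,v\in V: B(e_s,v)\ge 0\text{ for all }s\in S\,\}$ is a simplicial cone with exactly $|S|$ walls $\overline{C}\cap H_s$ $(s\in S)$.

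First I would recall the basic facts about such a reflection group (see \cite{humphreys}*{\S\S1.12--1.15} or \cite{book-building}*{Ch.~3}): $W$ permutes the chambers of the arrangement $\mathcal{A}:=\{\,wH_s : w\in W,\ s\in S\,\}$ simply transitively; $\overline{C}$ is a strict fundamental domain, so that $x\in\overline{C}$ and $wx\in\overline{C}$ force $wx=x$; and the isotropy group of any $x\in\overline{C}$ is the standard parabolic subgroup $W_{T(x)}$ with $T(x):=\{\,s\in S: x\in H_s\,\}$. For $T\subseteq S$ set $F_T:=\overline{C}\cap\bigcap_{s\in T}H_s$, a face of $\overline{C}$ of dimension $|S|-|T|$ whose relative interior has isotropy exactly $W_T$ and which $W_T$ fixes pointwise; every face of every chamber of $\mathcal{A}$ is a $W$-translate of some $F_T$. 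Restricting $\mathcal{A}$ to the unit sphere $\sph^{|S|-1}\subseteq V$ then produces a finite simplicial decomposition $\Sigma$ of $\sph^{|S|-1}$ (a standard property of reflection arrangements): its simplices are the spherical simplices $wF_T\cap\sph^{|S|-1}$ with $T\subsetneq S$ --- the case $T=S$ being excluded since $F_S=\{0\}$ misses the sphere --- and each $w\overline{C}\cap\sph^{|S|-1}$ is a spherical $(|S|-1)$-simplex whose faces are the $wF_T\cap\sph^{|S|-1}$.

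It remains to match $\Sigma$ with $X_W$, and I claim the assignment $wW_T\mapsto wF_T\cap\sph^{|S|-1}$ does so as an isomorphism of face posets. Using the strict fundamental domain property, an inclusion $wF_T\cap\sph^{|S|-1}\subseteq w'F_{T'}\cap\sph^{|S|-1}$ forces $(w')^{-1}w$ to fix a point in the relative interior of $F_T$, hence $(w')^{-1}w\in W_T$, and then $F_T\subseteq F_{T'}$, i.e.\ $T'\subseteq T$; conversely these two conditions yield the inclusion. So $wF_T\cap\sph^{|S|-1}$ is a face of $w'F_{T'}\cap\sph^{|S|-1}$ exactly when $T'\subseteq T$ and $(w')^{-1}w\in W_T$ --- exactly the relation $wW_T\supseteq w'W_{T'}$ that defines the face order of $X_W$. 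Applying this to both inclusions gives well-definedness and injectivity, and surjectivity is the description of the faces of $\mathcal{A}$ recalled above. Since a simplicial complex is determined by its face poset, this poset isomorphism upgrades to a simplicial isomorphism $X_W\xrightarrow{\,\sim\,}\Sigma$; as $\Sigma$ triangulates $\sph^{|S|-1}$, we conclude $X_W\cong\sph^{|S|-1}$.

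The argument is entirely classical, and the real work is bookkeeping rather than insight: correctly identifying the isotropy groups and translating ``reverse inclusion of cosets'' into ``face containment of spherical simplices,'' together with the point I left implicit above --- that $\mathcal{A}\cap\sph^{|S|-1}$ genuinely assembles into a simplicial complex, two closed faces meeting in a common face. All of this, including the last point, is carried out in detail in \cite{book-building}*{Chapter~3}, where precisely this geometric realization of the Coxeter complex of a finite Coxeter group appears.
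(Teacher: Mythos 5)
Your argument is correct and follows exactly the route the paper indicates: the paper gives no proof of its own but cites \cite{book-building}*{Proposition 1.108} and then sketches precisely this geometric realization of $X_W$ as the equivariant triangulation of $\sph^{|S|-1}$ cut out by the reflection hyperplanes of the canonical representation. Your write-up simply fills in the standard details (positive definiteness of the form, the strict fundamental domain, the isotropy computation, and the poset isomorphism $wW_T\mapsto wF_T\cap\sph^{|S|-1}$), all of which check out.
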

See \cite[Proposition 1.108]{book-building} for the proof.
Alternatively, $W$ can be realized
as an orthogonal reflection group on the $|S|$-dimensional Euclidean space
$\R^{|S|}$ and hence it acts on the unit sphere $\sph^{|S|-1}$. 
Each $s\in S$ acts on $\sph^{|S|-1}$
as an orthogonal reflection.
The Coxeter complex $X_W$ coincides
with the equivariant triangulation of $\sph^{|S|-1}$ cut out by the
reflection hyperplanes for $W$.
 In case $W$ is infinite, Serre proved the following result:
\begin{pro}[{\cite[Lemma 4]{serre}}]
\label{serre-cont} If $W$ is an infinite Coxeter group, 
then $X_W$ is contractible.
\end{pro}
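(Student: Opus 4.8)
The plan is to follow Serre's approach, via the Tits cone. Realize $W$ as a reflection group acting on a real vector space $V$ of dimension $|S|$ through the canonical (geometric) representation; in the dual $V^{*}$ one then has the open fundamental chamber $C=\{f:f(\alpha_s)>0\text{ for all }s\in S\}$, a simplicial cone, and the Tits cone $U=\bigcup_{w\in W}w\overline{C}$ (see \cite{bourbaki}*{Ch.~V, \S4} or \cite{humphreys}*{\S5.13}). Intersecting the closed chambers $w\overline{C}$ and their faces with a sphere $\sph^{|S|-1}$ about the origin realizes $X_W$, $W$-equivariantly, as a simplicial decomposition of $U\cap\sph^{|S|-1}$, the cosets $wW_T$ ($T\subsetneq S$) matching the faces of the chambers; this recovers the preceding proposition when $W$ is finite, since then $U$ is the whole space. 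See \cite{book-building}*{Chapter~3}. Since $U$ is a cone, the radial map $v\mapsto v/\|v\|$ is a deformation retraction of $U\setminus\{0\}$ onto $U\cap\sph^{|S|-1}$, so it suffices to show that $U\setminus\{0\}$ is contractible; and since any convex subset of a Euclidean space is contractible, it suffices to show that $U\setminus\{0\}$ is convex.

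I would obtain this from two facts about $U$. First, $U$ is convex: this is the substantial classical input, due to Tits (see \cite{bourbaki}*{Ch.~V, \S4} or \cite{humphreys}*{\S5.13}). Second, because $W$ is infinite, $U$ is \emph{pointed}, i.e.\ $U\cap(-U)=\{0\}$. Granting these, let $x,y\in U\setminus\{0\}$. The segment $[x,y]$ lies in $U$ by convexity, and it avoids the origin: otherwise $y=-\lambda x$ for some $\lambda>0$, whence $-x\in U$ because $U$ is a cone, so $x\in U\cap(-U)=\{0\}$, a contradiction. Hence $U\setminus\{0\}$ is convex, therefore contractible, which finishes the proof.

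The pointedness of $U$ is the conceptual heart, and the step I expect to be the main obstacle, since this is the only place where the hypothesis that $W$ is infinite is used, namely through the absence of a longest element. The key computation runs as follows. Let $f\in V^{*}$ be strictly dominant, so that $f(\alpha_s)>0$ for every simple root $\alpha_s$ and hence $f(\beta)>0$ for every positive root $\beta$, and suppose $-f\in U$, say $-f\in w\overline{C}$. Then $f(w\alpha_s)\le 0$ for every $s\in S$, which forces every $w\alpha_s$ to be a negative root, i.e.\ $\ell(ws)<\ell(w)$ for every $s\in S$; but then $w$ is the longest element of $W$, which does not exist when $W$ is infinite. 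This shows $(-C)\cap U=\varnothing$. Promoting it to the full statement $U\cap(-U)=\{0\}$ (one must also treat the lower-dimensional faces $w\overline{C_T}$ of chambers, along which $U$ is not closed) is routine; in the write-up it is cleanest to isolate this as a lemma and invoke \cite{bourbaki} or \cite{humphreys} for the standard fact that the Tits cone of an infinite Coxeter group is a convex cone containing no line. As an alternative to the whole argument, one can build $X_W$ up one chamber at a time along a shelling (Bj\"orner): for a suitable enumeration of the chambers, each meets the union of its predecessors in a nonempty collapsible union of facets, so $X_W$ is an increasing union of collapsible subcomplexes with each inclusion a homotopy equivalence, whence $X_W$ is contractible. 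The Tits cone argument is shorter, and is the one Serre gives.
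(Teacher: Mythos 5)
The paper gives no proof of this proposition---it simply cites Serre's Lemma 4---so I assess your argument on its own terms. Your overall strategy (realize $X_W$ as $U\cap\sph^{|S|-1}$ for the Tits cone $U$, retract radially, and combine Tits' convexity theorem with the nonexistence of a longest element) is the standard and correct one, and your computation showing $(-C)\cap U=\varnothing$ is right and is exactly where infiniteness enters. However, the step you single out as the conceptual heart---that the Tits cone of an infinite Coxeter group is pointed, $U\cap(-U)=\{0\}$---is false in general, and neither \cite{bourbaki} nor \cite{humphreys} will supply it. Take $W=W_1\times W_2$ with $W_1$ finite nontrivial and $W_2$ infinite, e.g.\ $W(A_1)$ times the infinite dihedral group, an infinite Coxeter group of rank $3$. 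The chambers of $W$ are products of chambers, so $U=U_1\times U_2$, and $U_1=V_1^{*}$ because $W_1$ is finite; hence $U\cap(-U)\supseteq V_1^{*}\times\{0\}\neq\{0\}$ and the Tits cone contains a line. Consequently $U\setminus\{0\}$ is \emph{not} convex for such $W$ (the segment from $(x_1,0)$ to $(-x_1,0)$ passes through the origin), and your final step collapses. This matters here: the paper applies the proposition to arbitrary infinite Coxeter groups in the induction proving Claim \ref{claim2}, including reducible ones with finite factors.

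The repair is short and uses only what you have already established. From $(-C)\cap U=\varnothing$ you get $U\neq V^{*}$; since $U$ is a cone with nonempty interior (it contains $C$), this forces $0\notin\Int U$. Now $U\setminus\{0\}$ is star-shaped with respect to any $c\in C$: for $x\in U\setminus\{0\}$ the segment $[x,c]$ lies in $U$ by Tits' convexity theorem, and every point of $(x,c]$ lies in $\Int U$ (the segment from a point of a convex set to an interior point meets the interior except possibly at its initial endpoint), hence is nonzero; since also $x\neq0$, the segment avoids the origin. Star-shaped sets are contractible, so $U\setminus\{0\}$, and with it $X_W\approx U\cap\sph^{|S|-1}$, is contractible. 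In short: keep your longest-element computation, but replace ``pointed, hence $U\setminus\{0\}$ convex'' by ``$0\notin\Int U$, hence $U\setminus\{0\}$ star-shaped,'' and drop the pointedness lemma entirely.
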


\subsection{Equivariant homology of Coxeter complexes}
Given a Coxeter group $W$, 
let $H_k^W(X_W,\Zp)$ be the $k$-th equivariant homology group of $X_W$
(see \cite[Chapter VII]{brown} for the definition).
If $X_W$ is infinite, then $X_W$ is contractible so that the equivariant homology is
isomorphic to the homology of $W$:
\begin{pro}\label{compare0}
If $W$ is an infinite Coxeter group, then 
\[H_k^W(X_W,\Zp)\cong H_k(W,\Zp)\]
for all $k$. 
\end{pro}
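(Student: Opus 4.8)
The plan is to combine Serre's contractibility result (Proposition~\ref{serre-cont}) with the standard spectral sequence computing equivariant homology. Recall from \cite[Chapter~VII]{brown} that, for a projective resolution $P_*\to\Z$ over $\Z W$ and the cellular chain complex $C_*(X_W;\Zp)$ of $X_W$, the group $H^W_k(X_W,\Zp)$ is the $k$-th homology of the total complex of the double complex $P_*\otimes_{\Z W}C_*(X_W;\Zp)$. Filtering this double complex by the degree in the $P_*$-factor, and using that each $P_p$ is flat over $\Z W$ so that $P_p\otimes_{\Z W}-$ commutes with homology, one obtains a first-quadrant spectral sequence
\[
E^2_{p,q}=H_p\bigl(W;H_q(X_W;\Zp)\bigr)\ \Longrightarrow\ H^W_{p+q}(X_W,\Zp).
\]

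Next I would feed in the hypothesis. Since $W$ is infinite, Proposition~\ref{serre-cont} gives that $X_W$ is contractible; in particular it is connected, so $H_0(X_W;\Zp)\cong\Zp$ with trivial $W$-action and $H_q(X_W;\Zp)=0$ for $q>0$. Hence $E^2_{p,q}=H_p(W;\Zp)$ when $q=0$ and $E^2_{p,q}=0$ otherwise, so the spectral sequence collapses at the $E^2$-page and the edge homomorphism yields the desired isomorphism $H^W_k(X_W,\Zp)\cong H_k(W,\Zp)$ for all $k$.

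There is no serious obstacle here: the statement is a formal consequence of contractibility, and the only points deserving an explicit word are the triviality of the $W$-action on $H_0(X_W;\Zp)$, which is automatic since $X_W$ is connected, and the flatness of the terms $P_p$, which is standard. Equivalently one could argue purely topologically: contractibility of $X_W$ makes the $W$-equivariant projection from $EW\times X_W$ to $EW$ an equivariant homotopy equivalence between free $W$-CW complexes, hence it descends to a homotopy equivalence of quotients $EW\times_W X_W\simeq BW$, and applying $H_*(-;\Zp)$ gives the claim. I would nonetheless present the spectral-sequence version, since the same double complex — filtered in the other direction, so that the isotropy subgroups of simplices enter — is the tool needed for the rest of this section.
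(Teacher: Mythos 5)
Your proof is correct and takes essentially the same route as the paper: the author deduces the isomorphism directly from Serre's contractibility result via the spectral sequence $E^2_{i,j}=H_i(W,H_j(X_W,\Zp))\Rightarrow H^W_{i+j}(X_W,\Zp)$ of \cite[VII.7]{brown}, which is exactly the one you construct (and which the paper spells out explicitly only in the finite case, Proposition~\ref{compare}). No discrepancies to report.
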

If $W$ is finite, then $H_k^W(X_W,\Zp)$ may not be isomorphic to $H_k(W,\Zp)$,
however, they are isomorphic if $k$ is relatively small:
\begin{pro}\label{compare}
If $W$ is a finite Coxeter group, then 
\[
H_k^W(X_W,\Zp)\cong H_k(W,\Zp)
\]
for  $k\leq \rank\,W-1$.
\end{pro}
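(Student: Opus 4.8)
The plan is to compare $H_k^W(X_W,\Zp)$ with $H_k(W,\Zp)$ via the Serre spectral sequence of the Borel construction, exploiting that for finite $W$ the space $X_W$ is a sphere on which $W$ acts with plenty of orientation‑reversing symmetry. Write $n=\rank W$. By the proposition just above (and the reflection‑group description of $X_W$ recalled in \S\ref{complex}), $X_W$ is a triangulation of $\sph^{n-1}$ on which $W$ acts through its geometric representation, every Coxeter generator acting as an orthogonal reflection. Hence, as $\Zp[W]$-modules, $H_0(X_W,\Zp)\cong\Zp$ (trivial action), $H_{n-1}(X_W,\Zp)\cong\Zp_{\sgn}$, and $H_q(X_W,\Zp)=0$ otherwise, where $\Zp_{\sgn}$ denotes $\Zp$ with $W$ acting through the sign character $\sgn\colon W\to\{\pm1\}$, $s\mapsto-1$ for $s\in S$ — this being precisely the action of $W$ on the fundamental class of $\sph^{n-1}$, since each generator acts with determinant $-1$. (The rank‑one case $W\cong\Z/2$ is immediate and I would treat it separately; assume $n\geq2$ below.)

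Next I would invoke the homological Serre spectral sequence of the fibration $X_W\to EW\times_W X_W\to BW$, namely
\[
E^2_{p,q}=H_p(W,H_q(X_W,\Zp))\ \Longrightarrow\ H_{p+q}^W(X_W,\Zp)
\]
with local coefficients, whose edge homomorphism $H_k^W(X_W,\Zp)\to E^2_{k,0}=H_k(W,\Zp)$ is the map induced by the projection $X_W\to\mathrm{pt}$. By the computation of $H_*(X_W,\Zp)$ only the rows $q=0$ and $q=n-1$ are nonzero, so $E^2=\cdots=E^n$ and the single possibly nonzero differential is
\[
d^n\colon E^n_{p,0}=H_p(W,\Zp)\longrightarrow E^n_{p-n,n-1}=H_{p-n}(W,\Zp_{\sgn}).
\]
Consequently, for $k\leq n-1$ one gets $E^\infty_{k,0}=\ker\!\big(d^n\colon H_k(W,\Zp)\to H_{k-n}(W,\Zp_{\sgn})\big)=H_k(W,\Zp)$ because $k-n<0$; while the only other associated graded piece of $H_k^W(X_W,\Zp)$, namely $E^\infty_{k-(n-1),\,n-1}$, is a quotient of $H_{k-(n-1)}(W,\Zp_{\sgn})$, which vanishes for $k<n-1$ and equals $H_0(W,\Zp_{\sgn})=(\Zp_{\sgn})_W$ for $k=n-1$.

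The heart of the matter is then the observation that $(\Zp_{\sgn})_W=0$: for any Coxeter generator $s$ one has $s\cdot1-1=-2$ in $\Zp_{\sgn}$, so the submodule generated by all $w\cdot v-v$ contains $2\Zp$, whence $(\Zp_{\sgn})_W$ is a quotient of $\Zp/2\Zp=0$ since $p$ is odd. Granting this, the spectral sequence collapses in total degree $\leq n-1$ onto its bottom row, and the edge homomorphism yields $H_k^W(X_W,\Zp)\cong H_k(W,\Zp)$ for all $k\leq n-1=\rank W-1$, which is the assertion. I expect the only slightly delicate steps to be the identification of $H_{n-1}(X_W,\Zp)$ with the sign representation as a $\Zp[W]$-module, and the recognition that the gain of the extra degree $k=n-1$, beyond what a naive connectivity estimate (giving only $k\leq n-2$) would produce, is bought exactly by the $p$-local vanishing of this top‑row contribution — which is also the only place the hypothesis that $p$ be odd is used.
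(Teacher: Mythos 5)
Your argument is correct and is essentially the paper's own proof: the same spectral sequence $E^2_{i,j}=H_i(W,H_j(X_W,\Zp))\Rightarrow H^W_{i+j}(X_W,\Zp)$ with only the rows $j=0$ and $j=\rank W-1$ nonzero, and the same key point that each generator acts on the top homology of the sphere by $-1$, so the coinvariants $H_{\rank W-1}(X_W,\Zp)_W$ die because $2$ is invertible in $\Zp$. Your explicit bookkeeping of the $d^n$ differential and the edge homomorphism is a slightly more careful rendering of what the paper leaves implicit, but it is the same proof.
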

\begin{proof}
Consider the spectral sequence
\[
E_{i,j}^2=H_i(W,H_j(X_W,\Zp))\Rightarrow H_{i+j}^W(X_W,\Zp)
\]
(see \cite[VII.7]{brown})
and note that $E_{i,0}^2\cong H_i(W,\Zp)$ for all $i$.
Since $X_W$ is homeomorphic to $\sph^{|S|-1}$, we have $E_{i,j}^2=0$ for
$j\not=0,|S|-1$. Hence $H_k^W(X_W,\Zp)\cong H_k(W,\Zp)$ for
$k\leq |S|-2$. Now
\[
E^2_{0,|S|-1}=H_0(W,H_{|S|-1}(X_W,\Zp))= H_{|S|-1}(X_W,\Zp)_W
\]
where the RHS is the co-invariant of $H_{|S|-1}(X_W,\Zp)$ as a $W$-module
(see \cite[III.1]{brown}).
Since each $s\in S$ acts on $X_W\approx\sph^{|S|-1}$ as an
orthogonal reflection as mentioned in \S\ref{complex}, 
it acts on $H_{|S|-1}(X_W,\Zp)\cong\Zp$ as the multiplication
by $-1$. It follows that the co-invariant $H_{|S|-1}(X_W,\Zp)_W$ is isomorphic to the quotient
group of $\Zp$ by the subgroup generated by $r-(-1)r=2r$ $(r\in\Zp)$.
But this subgroup is nothing but the whole group $\Zp$ because $2$ is invertible in $\Zp$. 
This proves $E^2_{0,|S|-1}=0$ and hence
\[
H_{|S|-1}^W(X_W,\Zp)\cong H_{|S|-1}(W,\Zp)
\]
as desired.
\end{proof}

\section{Proof of Theorem \ref{main-thm}}\label{sec-proof}
We will prove Theorem \ref{main-thm} by showing the following two claims:
\begin{claim}\label{claim1}
If $W$ is a finite $p$-free Coxeter group with $\rank\,W\leq 2(p-2)$,
then $H_k(W,\Zp)=0$ for $1\leq k\leq 2(p-2)$.
\end{claim}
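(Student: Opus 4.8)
The plan is to reduce to irreducible finite $p$-free Coxeter groups and then handle those individually. By the Künneth theorem, if $W = W_1 \times \cdots \times W_r$ is a product of irreducible factors, then $H_*(W,\Zp)$ is built from the $H_*(W_i,\Zp)$ (recall $\Zp$ is a PID and all homology groups in positive degree are finite, hence finitely generated); since each $W_i$ is itself $p$-free with $\rank W_i \le \rank W \le 2(p-2)$, it suffices to prove that $H_k(W,\Zp) = 0$ for $1 \le k \le 2(p-2)$ when $W$ is irreducible finite $p$-free with $\rank W \le 2(p-2)$. Here I would need the Künneth argument to propagate vanishing: if each factor has $\widetilde{H}_k = 0$ for $1 \le k \le 2(p-2)$ and the factors are finite groups, then a product of at most $2(p-2)$ such factors (the rank bound controls the number of factors, since each irreducible factor has rank $\ge 1$) still has vanishing reduced homology in that range — one checks that the lowest possible nonzero degree in a nontrivial Künneth/Tor contribution from two factors is at least $2(p-2)+1$, using that each factor is $(2(p-2))$-acyclic and that Tor shifts degree up by one. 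Actually the cleanest route is: if $A,B$ are finite groups with $\widetilde{H}_i = 0$ for $1 \le i \le N$, then $\widetilde{H}_i(A\times B) = 0$ for $1 \le i \le N$ as well, by the Künneth formula, and then induct on the number of factors.

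Next I would invoke the classification of finite irreducible Coxeter groups recalled in \S\ref{subsec-defn}: the families $A_n, B_n, D_n, I_2(q)$ and the exceptionals $E_6, E_7, E_8, F_4, H_3, H_4$. For a fixed odd prime $p$, I would use the $p$-freeness constraint together with the Appendix data to list exactly which of these, subject to $\rank W \le 2(p-2)$, can occur. For $p = 3$ the bound $\rank W \le 2$ leaves essentially only $I_2(q)$ with $3 \nmid q$ (and $A_1, A_2 = I_2(3)$ is excluded by $p$-freeness), and the claimed vanishing range is $1 \le k \le 2$, which is already covered by Corollary \ref{low-p-homology}; so $p=3$ is immediate. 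For $p \ge 5$ the type $A_n$ groups are automatically $p$-free, so $\symm_{n+1} = W(A_n)$ with $n \le 2(p-2)$ appears, and here Theorem \ref{nakaoka-result} gives exactly the vanishing we want; similarly $B_n$ (a symmetric group wreathed with $\Z/2$) is $p$-free for $p \ge 5$ and I would either cite known stable homology computations for the hyperoctahedral groups or deduce it from $\symm_n$-acyclicity via the wreath-product homology, since the $\Z/2$-part contributes nothing $p$-locally. The remaining cases — $D_n$, $I_2(q)$ with $p \nmid q$, and the handful of exceptionals that are $p$-free with small rank — are finite groups whose order I would examine: in most of these the relevant prime $p$ does not divide $|W|$ at all, so $H_*(W,\Zp) = 0$ in positive degrees outright by Proposition \ref{p-tor-coho} (stated later, but available), and the only genuinely delicate cases are those where $p \mid |W|$ but $W$ is still $p$-free and low-rank.

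The main obstacle will be exactly that last bucket: the finitely many pairs $(W, p)$ with $W$ irreducible finite $p$-free, $\rank W \le 2(p-2)$, and $p \mid |W|$, where neither Nakaoka's theorem nor a trivial order argument applies. I expect these to be controlled by a Sylow/transfer argument — the $p$-local homology injects into that of a Sylow $p$-subgroup, and $p$-freeness forces the $p$-torsion to come from ``parabolic-like'' cyclic or elementary abelian pieces whose low-dimensional homology one can compute directly — but pinning down the complete finite list and dispatching each case is where the real work lies; this is presumably why the paper describes the proof of this claim as ``a case by case argument.'' I would organize it as a table indexed by the type of $W$, with a one-line justification (order coprime to $p$, or Nakaoka, or explicit Sylow computation) in each row.
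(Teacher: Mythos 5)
Your overall strategy is the paper's: reduce via the K\"unneth theorem to finite irreducible $p$-free Coxeter groups of rank at most $2(p-2)$, then work through the classification type by type. Your K\"unneth bookkeeping (a finite product of finite groups each acyclic over $\Zp$ in degrees $1,\dots,N$ is again acyclic in that range, by induction on the number of factors) is correct and is exactly how the paper reduces Claim \ref{claim1} to the irreducible case. The one genuine difference of route is type $A_n$: you quote Theorem \ref{nakaoka-result}, whereas the paper deliberately avoids it and instead proves Lemma \ref{case-symm} directly from Swan's theorem --- for $p\le n\le 2p-1$ the Sylow $p$-subgroup of $\symm_n$ is cyclic of order $p$ and its normalizer surjects onto $\Aut(C_p)$, so $H^k(\symm_n,\Z)_{(p)}\cong H^k(C_p,\Z)^{N_p}$ is concentrated in degrees divisible by $2(p-1)$. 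Your route is legitimate but imports homology stability and the computation of $H_*(\symm_\infty,\Z/p)$; the paper's buys an independent (and much shorter) proof of Nakaoka's vanishing range as a by-product.

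Two places where your plan needs repair or completion before it is a proof. First, $D_n$ does not fall to an order argument: $p$ divides $|W(D_n)|=2^{n-1}n!$ as soon as $n\ge p$. The paper handles $B_n$ and $D_n$ uniformly via the semidirect products $(\Z/2)^n\rtimes\symm_n$ and $(\Z/2)^{n-1}\rtimes\symm_n$ (Proposition \ref{ABD}), in the spirit of your wreath-product remark for $B_n$; alternatively the index of $\symm_n$ in $W(D_n)$ is a power of $2$, hence invertible in $\Zp$, so a transfer argument also works. Second, you must actually produce the ``delicate bucket'': for $p\ge 11$ every type other than $A_n,B_n,D_n,I_2(q)$ has order prime to $p$; for $p=3$ everything follows from Corollary \ref{low-p-homology} as you say; and the only remaining cases are $W(E_6)$ at $p=5$ and $W(E_7),W(E_8)$ at $p=7$ (note $E_7,E_8$ are excluded at $p=5$ by the rank bound $2(p-2)=6$, and $F_4,H_3,H_4$ by $p$-freeness or by order). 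The paper dispatches these exactly by the mechanism you predict: each contains a type-$A$ parabolic ($W(A_4)$, resp.\ $W(A_6)$) sharing its cyclic Sylow $p$-subgroup, so the injective transfer to the Sylow subgroup factors through the homology of that parabolic, which vanishes in the required range by the $A_n$ case. Until that finite list is written down and the transfer factorization verified, the argument is a plan rather than a proof, but the plan is the right one.
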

\begin{claim}\label{claim2}
Claim \ref{claim1} implies Theorem \ref{main-thm}.
\end{claim}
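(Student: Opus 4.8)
The plan is to establish Claim~\ref{claim2} by induction on the rank $n=\rank\,W$, using the equivariant homology $H^W_k(X_W,\Zp)$ of the Coxeter complex as an intermediary between $H_k(W,\Zp)$ and the $p$-local homology of the \emph{proper} parabolic subgroups of $W$, which have strictly smaller rank. If $W$ is finite with $n\leq 2(p-2)$ (this already covers the degenerate cases $n=0,1$, where $W$ is trivial or $\Z/2$), the assertion is exactly Claim~\ref{claim1} and there is nothing to prove. In every remaining case — $W$ infinite, or $W$ finite with $n>2(p-2)$ — Propositions~\ref{compare0} and~\ref{compare} provide an isomorphism $H_k(W,\Zp)\cong H^W_k(X_W,\Zp)$ throughout the range $1\leq k\leq 2(p-2)$: when $W$ is infinite this holds for all $k$ by Proposition~\ref{compare0}, and when $W$ is finite with $n>2(p-2)$ we have $n-1\geq 2(p-2)$, so the range is covered by Proposition~\ref{compare}. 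Hence in these cases it suffices to prove $H^W_k(X_W,\Zp)=0$ for $1\leq k\leq 2(p-2)$.

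To do that I would run the spectral sequence of the simplicial $W$-complex $X_W$ associated to its skeletal filtration (see \cite[VII.7]{brown}),
\[
E^1_{i,j}=\bigoplus_{\sigma}H_j(W_\sigma,\Zp)\ \Longrightarrow\ H^W_{i+j}(X_W,\Zp),
\]
where $\sigma$ runs over a set of representatives of the $W$-orbits of $i$-simplices of $X_W$ and $W_\sigma$ is the isotropy group of $\sigma$. By \S\ref{complex} each $W_\sigma$ fixes $\sigma$ pointwise, so no orientation twists occur, and the type function identifies the orbit representatives with the simplices $W_T$ $(T\subsetneq S)$ of $\Delta_W$, the one of dimension $i$ corresponding to $|T|=n-1-i$. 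Thus
\[
E^1_{i,j}\cong\bigoplus_{T\subsetneq S,\ |T|=n-1-i}H_j(W_T,\Zp).
\]
Every $T$ occurring here has $|T|\leq n-1$, so $W_T$ is a proper parabolic subgroup; it is $p$-free of rank $<n$, so by the induction hypothesis $H_j(W_T,\Zp)=0$ for $1\leq j\leq 2(p-2)$, and therefore the entire strip $1\leq j\leq 2(p-2)$ of the $E^1$-page vanishes. In the bottom row $j=0$, the complex $(E^1_{\bullet,0},d^1)$ is the simplicial chain complex of the orbit space $X_W/W\cong\Delta_W$ with $\Zp$-coefficients; since $\Delta_W$ consists of a single simplex and its faces it is contractible, so $E^2_{i,0}=0$ for $i\geq 1$. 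Consequently, for $1\leq k\leq 2(p-2)$ every subquotient $E^\infty_{i,j}$ with $i+j=k$ is trapped either in $E^2_{i,0}$ with $i=k\geq 1$ (when $j=0$) or in $E^1_{i,j}$ with $1\leq j\leq 2(p-2)$ (when $j\geq 1$), and both of these groups are zero. Hence $H^W_k(X_W,\Zp)=0$ in this range, which closes the induction.

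The point that needs care is less an obstacle than a piece of bookkeeping: Proposition~\ref{compare} only compares $H^W_k(X_W,\Zp)$ with $H_k(W,\Zp)$ in degrees $k\leq\rank\,W-1$, so the spectral sequence argument says nothing about finite Coxeter groups of rank $\leq 2(p-2)$ in the top of the range — and these are precisely the groups treated by Claim~\ref{claim1}, so the two results dovetail exactly and the recursion bottoms out at finite groups of small rank. A secondary technical matter is the identification of the $E^1$-page above: one must know that the isotropy groups $wW_Tw^{-1}$ act trivially on the corresponding simplices, so that the coefficient systems stay untwisted and plain $\Zp$-coefficients suffice, and that the $j=0$ row is genuinely the chain complex of the strict fundamental domain $\Delta_W$. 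Both facts are part of the standard structure theory of Coxeter complexes recalled in \S\ref{complex}; granting them, the spectral sequence degenerates in the relevant range for purely homological-degree reasons, and no further computation is required.
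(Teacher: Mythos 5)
Your argument is correct and is essentially the same as the paper's: induction on the rank, the Leray spectral sequence for $X_W$ with $E^1$-page identified via the parabolic subgroups $W_T$, the contractibility of $\Delta_W$ killing the bottom row (the paper's Lemma \ref{base-term}), and Propositions \ref{compare0} and \ref{compare} to pass back to $H_k(W,\Zp)$, with the finite small-rank cases deferred to Claim \ref{claim1} exactly as you describe.
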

The first claim is equivalent to Theorem \ref{main-thm} for finite $p$-free
Coxeter groups with $\rank\,W\leq 2(p-2)$, and will be proved by a case by case argument.
The second claim will be proved by the induction on $\rank\,W$ by
using the equivariant homology of Coxeter complexes.
Let us prove Claim \ref{claim2} first.

\subsection{Proof of Claim \ref{claim2}}
For every Coxeter group $W$,
there is a spectral sequence
\begin{equation}\label{leray-ori}
E^1_{i,j}=\bigoplus_{\sigma\in\mathscr{S}_i}
H_j(W_\sigma,\Zp)\Rightarrow H_{i+j}^W(X_W,\Zp),
\end{equation}
where $\mathscr{S}_i$ is the set of representatives of $W$-orbits of $i$-simplices of $X_W$,
and $W_\sigma$ is the isotropy subgroup of an $i$-simplex $\sigma$
(see \cite[VII.7]{brown}).
It is the Leray spectral sequence for the natural projection
$EW\times_W X_W\rightarrow X_W/W$.
Note that $\Zp$ in $H_j(W_\sigma,\Zp)$ is the trivial $W_\sigma$-module
because $W_\sigma$ fixes $\sigma$ pointwise.
We may choose the subset
$\{W_T\ |\ T\subsetneq S, |T|=|S|-i-1\}$
(the set of $i$-simplices of $\Delta_W$) as $\mathscr{S}_i$,
and the spectral sequence can be rewritten as
\begin{equation}\label{leray}
E^1_{i,j}=\bigoplus_{\substack{T\subsetneq S \\ |T|=|S|-i-1}}
H_j(W_T,\Zp)\Rightarrow H_{i+j}^W(X_W,\Zp).
\end{equation}
\begin{lem}\label{base-term}
In the spectral sequence (\ref{leray}), $E^2_{i,0}=0$ for $i\not=0$ and
$E^2_{0,0}\cong\Zp$.
\end{lem}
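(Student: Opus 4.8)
The plan is to identify the row $j=0$ of the $E^1$-page of the spectral sequence (\ref{leray}) with a familiar algebraic complex and then compute its homology directly. Since $\Zp$ is the trivial module and $H_0(W_T,\Zp)\cong\Zp$ for every $T\subsetneq S$, the bottom row reads
\[
E^1_{i,0}=\bigoplus_{\substack{T\subsetneq S \\ |T|=|S|-i-1}}\Zp,
\]
which is exactly the group of simplicial $i$-chains (with $\Zp$-coefficients) of the simplex $\Delta_W$ whose vertex set is $S$ — equivalently, of the boundary-including full simplex on $|S|$ vertices. I would first check that the $d^1$-differential on the bottom row agrees, up to sign, with the simplicial boundary map of $\Delta_W$; this is a direct consequence of how the Leray/equivariant differential is defined on the quotient complex $X_W/W\cong\Delta_W$ (see \cite[VII.7]{brown}), together with the fact that all the coefficient maps $H_0(W_{T'},\Zp)\to H_0(W_T,\Zp)$ induced by inclusions $W_{T'}\hookrightarrow W_T$ are the identity on $\Zp$.

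Granting this identification, $E^2_{i,0}$ is the $i$-th reduced-plus-unreduced simplicial homology of $\Delta_W$. But $\Delta_W$ is the full simplex $W_\varnothing$ together with all of its proper faces, i.e. the closed $(|S|-1)$-simplex, which is contractible. Hence its simplicial homology with $\Zp$-coefficients is $\Zp$ concentrated in degree $0$ and vanishes in all positive degrees. This gives $E^2_{i,0}=0$ for $i\neq 0$ and $E^2_{0,0}\cong\Zp$, which is precisely the assertion of Lemma \ref{base-term}.

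The only point that requires care — and the step I expect to be the main obstacle — is pinning down the identification of the $d^1$-differential with the simplicial boundary operator, including the compatibility of signs coming from the chosen ordering/orientation of the simplices of $\Delta_W$ and the fact that the coefficient restriction maps on $H_0$ are trivial. Once the differential is correctly matched with the boundary map of the contractible complex $\Delta_W$, the homology computation is immediate. (One could alternatively phrase this via the augmented chain complex: the row $E^1_{\bullet,0}$ is the chain complex of $\Delta_W$, and $E^2_{0,0}\cong\Zp$ records $H_0$ of a connected space while $E^2_{i,0}=0$ for $i>0$ records acyclicity of the cone.)
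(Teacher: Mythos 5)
Your proposal is correct and follows essentially the same route as the paper: both identify the bottom row $(E^1_{\bullet,0},d^1)$ with the simplicial chain complex of $\Delta_W$ (the paper does this by showing the composite $C_i(\Delta_W,\Zp)\hookrightarrow C_i(X_W,\Zp)\twoheadrightarrow C_i(X_W,\Zp)_W$ is a chain isomorphism) and then conclude from the contractibility of the closed $(|S|-1)$-simplex $\Delta_W$. The compatibility of $d^1$ with the boundary operator, which you rightly flag as the point needing care, is exactly the step the paper verifies.
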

\begin{proof}
We claim $E^2_{i,0}\cong H_i(\Delta_W,\Zp)$ for all $i$, which implies
the lemma because $\Delta_W$ is an $(|S|-1)$-simplex and hence contractible.
Although such a claim may be familiar to experts, we write down the proof
for completeness.
To show the claim, we recall the construction of the spectral sequence
(\ref{leray-ori}) given in \cite{brown}*{VII.7}. 
At the first stage, the $E^1_{i,0}$-term of (\ref{leray-ori}) is given by
\[
E^1_{i,0}=H_0(W,C_i(X_W,\Zp))=C_i(X_W,\Zp)_W,
\]
which is isomorphic to the one in (\ref{leray-ori}) due to Eckmann-Shapiro lemma.
The differential $d^1:E^1_{i,0}\rightarrow E^1_{i-1,0}$
is the map induced by the boundary operator
$C_i(X_W,\Zp)\rightarrow C_{i-1}(X_W,\Zp)$.
On the other hand, the composition
\begin{equation}\label{fundom}
C_i(\Delta_W,\Zp)\hookrightarrow C_i(X_W,\Zp)
\twoheadrightarrow C_i(X_W,\Zp)_W
\end{equation}
is an isomorphism, where the first map is induced by the inclusion
$\Delta_W\hookrightarrow X_W$ and the second map is the natural
projection, because the subcomplex $\Delta_W$ forms the
set of representatives of $W$-orbits of simplices of $X_W$.
Moreover, the isomorphism (\ref{fundom}) is compatible
with the boundary operator of $C_i(\Delta_W,\Zp)$ and
the differential on $C_i(X_W,\Zp)_W$.
In other words, (\ref{fundom}) yields a chain isomorphism
of chain complexes
\[
(C_i(\Delta_W,\Zp),\partial)\rightarrow (C_i(X_W,\Zp)_W,d^1).
\]
The claim follows immediately.
\end{proof}

\begin{proof}[Proof of Claim \ref{claim2}]
We argue by the induction on $|S|$. 
When $W$ is finite, we may assume $|S|> 2(p-2)$, for we suppose
that Claim \ref{claim1} holds.
Consider the spectral sequence (\ref{leray}).
Observe first that all $W_T$'s appearing in (\ref{leray}) are $p$-free and satisfy
$|T|<|S|$.
By the induction assumption, we have $H_j(W_T,\Zp)=0$
$(1\leq j\leq 2(p-2))$ for 
all $T\subsetneq S$, which implies $E_{i,j}^1=E_{i,j}^2=0$ for $1\leq j\leq 2(p-2)$.
Moreover, $E^2_{i,0}=0$ for $i>0$ by Lemma \ref{base-term}. This proves
$H_k^W(X_W,\Zp)=0$ for $1\leq k\leq 2(p-2)$. Now Claim \ref{claim2} follows from
Proposition \ref{compare0} and \ref{compare}.
\end{proof}

\subsection{Proof of Claim \ref{claim1}}
Given an odd prime $p$, 
if $W$ is a finite $p$-free Coxeter group with $\rank\,W\leq 2(p-2)$,
then $W$ decomposes into the direct product of
finite irreducible $p$-free Coxeter groups
$W\cong W_1\times\cdots\times W_r$
with $\Sigma_{i=1}^r\rank\,W_i=\rank\,W$.
Since $\Zp$ is PID, we may apply the K\"unneth theorem to conclude that
Claim \ref{claim1} is equivalent to the following claim:
\begin{claim}\label{claim3}
If $W$ is a finite irreducible $p$-free Coxeter group with $\rank\,W\leq 2(p-2)$,
then $H_k(W,\Zp)$ vanishes for $1\leq k\leq 2(p-2)$.
\end{claim}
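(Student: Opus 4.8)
The plan is a case-by-case argument over the classification of finite irreducible Coxeter groups, in which almost every case is reduced to Nakaoka's Theorem \ref{nakaoka-result} by a transfer argument. I would first dispose of $p=3$: there $2(p-2)=2$, so Claim \ref{claim3} merely asserts $H_1(W,\Z_{(3)})=H_2(W,\Z_{(3)})=0$, which is Corollary \ref{low-p-homology}. So assume $p\geq 5$, and let $W$ be a finite irreducible $p$-free Coxeter group with $\rank\,W\leq 2(p-2)$; by the classification (see the Appendix) $W$ is of type $A_n$, $B_n$, $D_n$, $E_6$, $E_7$, $E_8$, $F_4$, $H_3$, $H_4$, or $I_2(q)$ with $p\nmid q$.

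The easy case is $p\nmid|W|$. Since $W$ is finite, $|W|$ annihilates $H_k(W,\Z)$ for $k>0$ (\cite{brown}*{III.10}), and $H_k(W,\Zp)\cong H_k(W,\Z)\otimes_{\Z}\Zp$ by (\ref{univ-p-local}); hence $H_k(W,\Zp)=0$ for all $k>0$. Reading the orders off the Appendix, this already handles $I_2(q)$ (a $p$-free $I_2(q)$ has $p\nmid 2q=|W|$), the types $F_4,H_3,H_4$ (whose orders $2^{7}\cdot 3^{2}$, $2^{3}\cdot 3\cdot 5$, $2^{6}\cdot 3^{2}\cdot 5^{2}$ are prime to $p$ whenever $W$ is $p$-free, i.e.\ for $p\geq 5$, resp.\ $p\geq 7$), and the types $A_n,B_n,D_n,E_6,E_7,E_8$ for all primes $p$ not dividing the relevant order.

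There remain the $W$ with $p\mid|W|$. The key point is that such a $W$ contains a standard parabolic subgroup $W_T$ of type $A_{|T|}$ --- so $W_T\cong\symm_{|T|+1}$ --- of index prime to $p$, equivalently with $v_p(|W_T|)=v_p(|W|)$ where $v_p$ denotes the $p$-adic valuation. Indeed: for $W$ of type $A_n$ take $W_T=W$; for $W(B_n)$ and $W(D_n)$ the permutation subgroup $\symm_n=W(A_{n-1})$ is a standard parabolic, and since $p$ is odd $v_p(|\symm_n|)=v_p(n!)=v_p(2^{n}n!)=v_p(|W(B_n)|)$, resp.\ $v_p(n!)=v_p(2^{n-1}n!)=v_p(|W(D_n)|)$; for $W(E_n)$ with $n\in\{6,7,8\}$ --- where the rank and $p$-freeness hypotheses force $p=5$ if $n=6$ and $p=7$ if $n\in\{7,8\}$ --- deleting the end node of the short arm of the $E_n$ diagram yields a standard parabolic of type $A_{n-1}\cong\symm_n$, and one checks from the Appendix that $v_p(|\symm_n|)=v_p(n!)=1=v_p(|W(E_n)|)$ (here $p\leq n<2p$). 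Since $[W:W_T]$ is a unit in $\Zp$, the map $\tr\colon H_*(W_T,\Zp)\to H_*(W,\Zp)$ induced by inclusion is split surjective, split by the transfer $\transfer\colon H_*(W,\Zp)\to H_*(W_T,\Zp)$ because $\tr\circ\transfer$ equals multiplication by $[W:W_T]$ on $H_*(W,\Zp)$. By Nakaoka's Theorem \ref{nakaoka-result}, $H_k(W_T,\Zp)=H_k(\symm_{|T|+1},\Zp)=0$ for $1\leq k\leq 2(p-2)$; therefore $H_k(W,\Zp)=0$ in that range, which is Claim \ref{claim3}.

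The only genuine work I anticipate is the bookkeeping in the previous paragraph: verifying, from the classification together with the order tables in the Appendix, that whenever $p\mid|W|$ the group $W$ admits a type-$A$ parabolic subgroup absorbing a full Sylow $p$-subgroup. It is precisely here that the hypothesis $\rank\,W\leq 2(p-2)$ enters --- for instance it rules out $E_8$ at $p=5$, where the Sylow $5$-subgroup has order $25$ but the largest symmetric parabolic $\symm_8$ contains only a single factor of $5$. Everything else is the standard transfer argument together with the fact that Nakaoka's vanishing range is independent of $n$.
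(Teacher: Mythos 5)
Your argument is correct, and its overall architecture --- a case-by-case run through the classification, disposing first of the types whose order is prime to $p$ and then reducing everything else to type $A$ --- is the same as the paper's. The difference lies in how the reductions are carried out. For type $A_n$ the paper deliberately does \emph{not} invoke Theorem \ref{nakaoka-result}: in Lemma \ref{case-symm} it proves the vanishing directly from Swan's lemma, identifying $H^k(\symm_n,\Z)_{(p)}$ with $H^k(C_p,\Z)^{N_p}$ for $p\leq n\leq 2p-1$ (the rank bound forces the Sylow $p$-subgroup to be cyclic of order $p$), precisely so as to obtain an independent and much shorter proof of Nakaoka's vanishing range rather than rely on homology stability and the computation of $H_*(\symm_\infty,\Z/p)$; your version is logically valid, since Theorem \ref{nakaoka-result} is available, but it forfeits that byproduct. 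For types $B_n$ and $D_n$ the paper uses the semidirect product decompositions $(\Z/2)^n\rtimes W(A_{n-1})$ and $(\Z/2)^{n-1}\rtimes W(A_{n-1})$ with the Lyndon--Hochschild--Serre spectral sequence (Proposition \ref{ABD}), obtaining a full isomorphism $H_*(W(B_n),\Zp)\cong H_*(W(A_{n-1}),\Zp)$ in all degrees, whereas your transfer argument yields only a split surjection from $H_*(\symm_n,\Zp)$ --- which is all the claim requires, and has the merit of treating $B_n$, $D_n$, $E_6$, $E_7$, $E_8$ completely uniformly. For the $E$-series the paper's argument is essentially yours, phrased via a common Sylow $p$-subgroup shared with a type-$A$ parabolic rather than via the index being prime to $p$; your bookkeeping (which $E_n$ occur for which $p$, and why the rank bound excludes $E_8$ at $p=5$) agrees with the paper's.
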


We prove Claim \ref{claim3} for each finite irreducible Coxeter group.
Firstly, the Coxeter group $W(I_2(q))$ of type $I_2(q)$ is $p$-free if and only if
$q$ is prime to $p$. If so, $H_*(W(I_2(q)),\Zp)=0$ for $*>0$ because the order of
$W(I_2(q))$ is $2q$ and hence having no $p$-torsion. 
Next, we prove the claim for the Coxeter group of type $A_n$.
To do so, we deal with cohomology instead of homology.
We invoke the following elementary lemma:
\begin{lem}\label{duality}
 Let $G$ be a finite group and $p\geq 2$ a prime. Then
$H_k(G,\Zp)\cong H^{k+1}(G,\Z)_{(p)}$ for all $k\geq 1$. 
\end{lem}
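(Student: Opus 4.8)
The plan is to deduce the statement from the universal coefficient theorem for cohomology together with the elementary fact that the higher integral homology of a finite group is finite. First I would recall that, since $G$ is finite, $H_n(G,\Z)$ is a finite abelian group for every $n\geq 1$: it is finitely generated by \cite{brown}*{Chapter VIII} (or simply because $G$ has a finite classifying complex in each degree up to $n$) and it is annihilated by $|G|$ (see \cite{brown}*{III.10}).

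Next I would invoke the universal coefficient exact sequence
\[
0\to \operatorname{Ext}^1_{\Z}(H_{n-1}(G,\Z),\Z)\to H^n(G,\Z)\to \Hom(H_n(G,\Z),\Z)\to 0 .
\]
For $n\geq 2$ both $H_{n-1}(G,\Z)$ and $H_n(G,\Z)$ are finite, hence $\Hom(H_n(G,\Z),\Z)=0$; and for any finite abelian group $A$ there is a (non-canonical) isomorphism $\operatorname{Ext}^1_{\Z}(A,\Z)\cong A$. Consequently
\[
H^{n}(G,\Z)\cong \operatorname{Ext}^1_{\Z}(H_{n-1}(G,\Z),\Z)\cong H_{n-1}(G,\Z)
\qquad (n\geq 2),
\]
which, after the substitution $n=k+1$, reads $H^{k+1}(G,\Z)\cong H_k(G,\Z)$ for all $k\geq 1$. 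Passing to $p$-primary components gives $H^{k+1}(G,\Z)_{(p)}\cong H_k(G,\Z)_{(p)}$, and by \eqref{univ-p-local} one has $H_k(G,\Z)_{(p)}\cong H_k(G,\Zp)$, which is exactly the asserted isomorphism.

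As for obstacles, there is essentially none: the argument is entirely formal. The only two points needing (minor) care are that the relevant homology lies strictly above degree $0$ — this is where the hypothesis $k\geq 1$ is used, so that $H_{n-1}(G,\Z)$ with $n=k+1$ is genuinely finite — and that the isomorphism $\operatorname{Ext}^1_{\Z}(A,\Z)\cong A$ for finite $A$ is not natural, so the resulting isomorphism $H_k(G,\Zp)\cong H^{k+1}(G,\Z)_{(p)}$ need not be functorial in $G$; but the lemma only asserts its existence, which suffices for the intended application to $W(A_n)$.
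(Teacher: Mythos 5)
Your proof is correct. The paper offers no proof of this lemma (it is merely called ``elementary''), and your argument --- finiteness of $H_n(G,\Z)$ for $n\geq 1$ via the transfer, the universal coefficient sequence killing the $\Hom$ term and identifying $H^{k+1}(G,\Z)$ with $\operatorname{Ext}^1_{\Z}(H_k(G,\Z),\Z)\cong H_k(G,\Z)$, followed by $p$-localization --- is exactly the standard argument the author intends, including the correct observation that the hypothesis $k\geq 1$ is what keeps $H_0(G,\Z)=\Z$ out of the picture.
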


Now Claim \ref{claim3} for $W(A_n)$ can be proved by applying standard arguments
in cohomology of finite groups:

\begin{lem}\label{case-symm}
$H_k(W(A_n),\Zp)=0$ $(1\leq k \leq 2(p-2))$ holds for all $n$ with 
$1\leq n \leq 2(p-1)$. 
\end{lem}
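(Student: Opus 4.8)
The plan is to reduce the statement about $W(A_n)\cong\symm_{n+1}$ to a computation in the mod-$p$ cohomology of symmetric groups, and then invoke the classical facts about Sylow $p$-subgroups. First I would translate homology into cohomology via Lemma \ref{duality}: it suffices to show $H^{k}(\symm_{n+1},\Z)_{(p)}=0$ for $2\leq k\leq 2(p-2)+1=2p-3$, equivalently $H^{k}(\symm_{n+1},\Z_{(p)})=0$ in that range. By the universal coefficient theorem this will follow once I know $H^{k}(\symm_{n+1},\Z/p)=0$ for $1\leq k\leq 2p-3$ together with the vanishing of the relevant $\tor$ term, so the real target is the mod-$p$ cohomology.

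The key input is that for $n+1\leq 2p-1$, a Sylow $p$-subgroup $P$ of $\symm_{n+1}$ is elementary abelian: indeed the $p$-adic valuation of $(n+1)!$ is $\lfloor (n+1)/p\rfloor\leq 1$ in this range, so $P$ is trivial (when $n+1<p$) or cyclic of order $p$ (when $p\leq n+1\leq 2p-1$). In the trivial case there is nothing to prove. In the cyclic case, $H^{*}(P,\Z/p)$ is a free module over $\Z/p[x]\otimes\Lambda(y)$ with $|y|=1$, $|x|=2$, so its bottom nonvanishing positive degree is $1$; the generator $y$ in degree $1$ is the one to control. The standard transfer/restriction argument shows $H^{*}(\symm_{n+1},\Z/p)$ embeds into the $N_{\symm_{n+1}}(P)$-invariants of $H^{*}(P,\Z/p)$. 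So I would compute the Weyl group $W_{\symm_{n+1}}(P)=N(P)/C(P)$, which for $P=\langle (1\,2\,\cdots\,p)\rangle$ inside $\symm_{n+1}$ is cyclic of order $p-1$ acting on $P$ by the full group of automorphisms; hence it acts on $H^{1}(P,\Z/p)=\Z/p$ through a nontrivial (in fact faithful) character, and on $H^{2}$ through its square. The invariants in degrees $1,\dots,2p-3$ are therefore zero: a monomial $x^{a}y^{\epsilon}$ has degree $2a+\epsilon$ and weight $a+\epsilon$ for the generator of $W(P)$ acting by a primitive $(p-1)$st root $\zeta$, i.e.\ multiplication by $\zeta^{a+\epsilon}$, which is trivial only when $(p-1)\mid a+\epsilon$; combined with $0<2a+\epsilon\leq 2p-3$ this forces $a+\epsilon=0$, contradiction. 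Thus $H^{k}(\symm_{n+1},\Z/p)=0$ for $1\leq k\leq 2p-3$ in this range of $n$.

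I would then assemble the pieces: Lemma \ref{duality} plus the universal coefficient theorem (taking care that $H^{k-1}$ vanishing makes the $\tor$ summand vanish too, so the $p$-local integral cohomology vanishes exactly where the mod-$p$ cohomology does, up to a shift) give $H_{k}(\symm_{n+1},\Z_{(p)})=0$ for $1\leq k\leq 2p-3\geq 2(p-2)$, which is the claim. The main obstacle is the cohomology-ring bookkeeping for the cyclic case: one must get the degrees of the polynomial and exterior generators right and be careful that the Weyl-group action on the degree-$1$ and degree-$2$ classes is by a primitive character and its square (not, say, trivially), since that is precisely what kills everything up through degree $2p-3$ and, incidentally, why $H_{2p-3}(\symm_n,\Z_{(p)})$ first becomes nonzero. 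The remaining cases $W(I_2(q))$ are immediate from order considerations, as noted.
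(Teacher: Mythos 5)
Your overall strategy is the same as the paper's (restrict to the cyclic Sylow $p$-subgroup $C_p$, identify the $p$-primary cohomology with the invariants under the normalizer via Swan/Cartan--Eilenberg, then convert back to homology via Lemma \ref{duality}), but your decision to run the computation with $\Z/p$-coefficients rather than integral ones introduces a genuine gap at the top of the range. In $H^{*}(C_p,\Z/p)\cong\Lambda(y)\otimes\Z/p[x]$ the monomial $x^{p-2}y$ has degree $2(p-2)+1=2p-3$ and weight $a+\epsilon=p-1$, so it \emph{is} invariant under the Weyl group $\Z/(p-1)$; your step ``combined with $0<2a+\epsilon\leq 2p-3$ this forces $a+\epsilon=0$'' overlooks exactly the pair $(a,\epsilon)=(p-2,1)$. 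Consequently $H^{2p-3}(\symm_{n+1},\Z/p)\neq 0$ for $n+1\geq p$, and your intermediate claim that the mod $p$ cohomology vanishes through degree $2p-3$ is false. This matters: to get $H_{2(p-2)}(\symm_{n+1},\Zp)=0$ you need $H^{2p-3}(\symm_{n+1},\Z)_{(p)}=0$, and mod $p$ vanishing in that degree is precisely what you cannot have. (Your closing assertion that $H_k(\symm_{n+1},\Zp)=0$ up to $k=2p-3$ would even contradict Remark \ref{best-possible}, since $H_{2p-3}(\symm_p,\Zp)\neq 0$.) A secondary slip: the Weyl group acts on $H^2(C_p,\Z/p)$ by the \emph{same} character as on $H^1$ (since $x=\beta y$ and the Bockstein is equivariant), not by its square; your weight formula $a+\epsilon$ is the correct one, but it is inconsistent with the ``square'' claim.

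The repair is either to argue as the paper does, with integral coefficients: $H^{*}(C_p,\Z)\cong\Z[u]/(pu)$ is concentrated in even degrees, the normalizer acts on $u$ through a generator of $(\Z/p)^{\times}$, so the invariant subring is generated by $u^{p-1}$ in degree $2p-2$ and everything strictly below that degree (in particular all of degrees $1,\dots,2p-3$) vanishes; or else to keep $\Z/p$-coefficients but add a universal-coefficient argument showing that the nonzero invariant class $x^{p-2}y$ in degree $2p-3$ contributes only to $\operatorname{Tor}(H^{2p-2}(\symm_{n+1},\Z),\Z/p)$ and not to $H^{2p-3}(\symm_{n+1},\Z)_{(p)}\otimes\Z/p$. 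The integral route is cleaner and is exactly what makes the paper's proof close in one line.
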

\begin{proof}
Recall that $W(A_n)$ is isomorphic to the symmetric group of 
$n+1$ letters $\symm_{n+1}$.
If $n<p$ then $\symm_n$ has no $p$-torsion and hence
$H_k(\symm_n,\Zp)=0$ for all $k>0$. 
Now suppose $p\leq n\leq 2p-1$, and let $C_p$ be a Sylow $p$-subgroup
of $\symm_n$, which is a cyclic group of order $p$. Then $H^*(C_p,\Z)\cong
\Z[u]/(pu)$ where $\deg u=2$.
Let $N_p$ be the normalizer 
of $C_p$ in $\symm_n$.
It acts on $C_p$ by conjugation, and the induced map
$N_p\rightarrow\Aut (C_p)\cong (\Z/p)^\times$ is known to be surjective.
Consequently, the invariant $H^*(C_p,\Z)^{N_p}$ is the subring generated by $u^{p-1}$.
Since $C_p$ is abelian, the restriction $H^*(\symm_n,\Z)\rightarrow
H^*(C_p,\Z)$ induces the isomorphism
\[
H^k(\symm_n,\Z)_{(p)}\cong H^k(C_p,\Z)^{N_p}
\]
for $k>0$ by a result
of Swan \cite{swan}*{Lemma 1 and Appendix} (see also \cite{thomas-book}*{Lemma 3.4}).
This proves, for $p\leq n\leq 2p-1$, that $H^k(\symm_n,\Z)_{(p)}=0$ $(0<k<2p-2)$ and
$H^{2p-2}(\symm_n,\Z)_{(p)}\not=0$.
In view of Lemma \ref{duality}, the proposition follows.
\end{proof}

\begin{rem}\label{best-possible}
Since $H_{2p-3}(W(A_{p-1}),\Zp)\cong H^{2p-2}(\symm_p,\Z)_{(p)}\not=0$
for all prime numbers $p$ as was observed in the proof of Lemma \ref{case-symm},
the vanishing range $1\leq k\leq 2(p-2)$ in our theorem is best possible for $p\geq 5$.
\end{rem}

\begin{rem} 
Of course, Lemma \ref{case-symm} is a direct consequence of
Theorem \ref{nakaoka-result}, however, we avoid the use of Theorem \ref{nakaoka-result}
for two reasons:
Firstly, by doing so, we provide an alternative proof of Theorem \ref{nakaoka-result}.
Secondly, the proof of Lemma \ref{case-symm} is much simpler than that of 
Theorem \ref{nakaoka-result}, for the latter relies on the homology stability for
symmetric groups and the computation of $H_*(\symm_\infty,\Z/p)$.
\end{rem}

Claim \ref{claim3} for the Coxeter groups of type $B_n$ and $D_n$ follows from
Lemma \ref{case-symm} and the following proposition:
\begin{pro}\label{ABD} For any odd prime number $p$,
\[
H_*(W(B_n),\Zp)\cong H_*(W(A_{n-1}),\Zp)
\]
holds for all $n\geq 2$, and
\[
H_*(W(D_n),\Zp)\cong H_*(W(A_{n-1}),\Zp)
\]
holds for all $n\geq 4$.
\end{pro}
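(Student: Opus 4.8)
The plan is to realise both $W(B_n)$ and $W(D_n)$ as semidirect products $A\rtimes\symm_n$ in which $A$ is a finite elementary abelian $2$-group and $\symm_n\cong W(A_{n-1})$ is embedded as a subgroup, and then to run the Lyndon--Hochschild--Serre spectral sequence together with the observation that, since $p$ is odd, $H_j(A,\Zp)=0$ for all $j>0$.

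First I would treat type $B_n$. The group $W(B_n)$ is the group of signed permutation matrices, so $W(B_n)\cong(\Z/2)^n\rtimes\symm_n$, with $\symm_n$ acting by permuting the coordinates of $(\Z/2)^n$. The Lyndon--Hochschild--Serre spectral sequence of this extension is
\[
E^2_{i,j}=H_i(\symm_n,H_j((\Z/2)^n,\Zp))\Rightarrow H_{i+j}(W(B_n),\Zp),
\]
where $\symm_n$ acts on $H_j((\Z/2)^n,\Zp)$ through conjugation. Since $(\Z/2)^n$ is a finite group whose order is prime to $p$, its $\Zp$-homology is concentrated in degree $0$, where it is $\Zp$ with trivial $\symm_n$-action; hence the spectral sequence collapses onto the edge $j=0$ and yields $H_*(W(B_n),\Zp)\cong H_*(\symm_n,\Zp)=H_*(W(A_{n-1}),\Zp)$, the isomorphism being induced by the inclusion $\symm_n\hookrightarrow W(B_n)$.

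For type $D_n$ the argument is identical once the semidirect product structure is pinned down, which is the only point that needs a little care. Recall that $W(D_n)$ is the subgroup of $W(B_n)$ consisting of signed permutation matrices with an even number of sign changes; equivalently $W(D_n)=K\rtimes\symm_n$, where $K$ is the kernel of the sum homomorphism $(\Z/2)^n\to\Z/2$ — a $\symm_n$-submodule of $(\Z/2)^n$ of order $2^{n-1}$ — and $\symm_n$ sits inside as the permutation matrices. As $2^{n-1}$ is again prime to the odd prime $p$, the same collapse of the Lyndon--Hochschild--Serre spectral sequence for $K\rtimes\symm_n$ gives $H_*(W(D_n),\Zp)\cong H_*(\symm_n,\Zp)=H_*(W(A_{n-1}),\Zp)$ for all $n\geq 4$.

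I do not anticipate a serious obstacle: the whole argument rests on the fact that $|A|$ is a power of $2$, hence invertible in $\Zp$, together with the standard semidirect product decompositions of the classical Weyl groups; the mildest point is simply verifying the $D_n$ decomposition. If one prefers to avoid spectral sequences, an alternative route is to note that $[W(B_n):\symm_n]=2^n$ and $[W(D_n):\symm_n]=2^{n-1}$ are prime to $p$, so that the transfer exhibits $H_*(W,\Zp)$ as a direct summand of $H_*(\symm_n,\Zp)$, while the projection $W\twoheadrightarrow W/A\cong\symm_n$ splits the inclusion and exhibits $H_*(\symm_n,\Zp)$ as a direct summand of $H_*(W,\Zp)$; over the principal ideal domain $\Zp$, with finitely generated homology in each degree, these two facts force an isomorphism.
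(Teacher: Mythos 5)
Your argument is correct and is essentially the paper's own proof: the paper likewise writes $W(B_n)\cong(\Z/2)^n\rtimes W(A_{n-1})$ and $W(D_n)\cong(\Z/2)^{n-1}\rtimes W(A_{n-1})$ and collapses the Lyndon--Hochschild--Serre spectral sequence using $H_j((\Z/2)^m,\Zp)=0$ for $j>0$. Your explicit identification of the $D_n$ kernel and the alternative transfer argument are fine additions but do not change the substance.
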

\begin{proof}
Recall that the Coxeter group $W(B_n)$ is isomorphic to the semi-direct product
$(\Z/2)^n \rtimes W(A_{n-1})$ (see \cite{davis-book}*{\S6.7} or \cite[\S1.1]{humphreys}).
In the Lyndon-Hochschild-Serre spectral sequence
\[
E^2_{i,j}=H_i(W(A_{n-1}),H_j((\Z/2)^n,\Zp))\Rightarrow H_{i+j}(W(B_n),\Zp),
\]
one has $E^2_{i,j}=0$ for $j\not=0$ since $H_j((\Z/2)^n,\Zp)=0$ for $j\not=0$.
This proves $H_*(W(B_n),\Zp)\cong H_*(W(A_{n-1}),\Zp)$.
On the other hand,
$W(D_n)$ is known to be isomorphic to the semi-direct product
$(\Z/2)^{n-1} \rtimes W(A_{n-1})$ (see loc.~cit.), and the proof for
$H_*(W(D_n),\Zp)\cong H_*(W(A_{n-1}),\Zp)$ is similar.
\end{proof}
These observations prove Claim \ref{claim3} for $p\geq 11$,
for all finite irreducible Coxeter groups of type other than $A_n,B_n,D_n$ and $I_2(q)$
have no $p$-torsion for $p\geq 11$.
The case $p=3$ follows from Corollary \ref{low-p-homology}.
Now we will prove the cases $p=5$ and $p=7$.
Observe that, apart from Coxeter groups of type $A_n$, $B_n$, $D_n$ and $I_2(q)$,
finite irreducible $p$-free Coxeter groups, with rank at most $2(p-2)$ and having $p$-torsion, are
$W(E_6)$ for $p=5$, $W(E_7)$ and $W(E_8)$ for $p=7$.
So the proof of Claim \ref{claim3} is completed by showing the following lemma:

\begin{lem}
$H_k(W(E_6),\Z_{(5)})$ vanishes for $1\leq k\leq 6$, while 
$H_k(W(E_7),\Z_{(7)})$ and $H_k(W(E_8),\Z_{(7)})$ vanish for $1\leq k\leq 10$.
\end{lem}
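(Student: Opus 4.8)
The plan is to re-run the argument of Lemma \ref{case-symm} for these three groups. First I would pass to cohomology via Lemma \ref{duality}, so that it suffices to prove $H^k(W(E_6),\Z)_{(5)}=0$ for $1\le k\le 7$ and $H^k(W(E_7),\Z)_{(7)}=H^k(W(E_8),\Z)_{(7)}=0$ for $1\le k\le 11$. In each case the prime in question divides $|W|$ to the first power only — from the Appendix, $|W(E_6)|=2^7\cdot 3^4\cdot 5$, $|W(E_7)|=2^{10}\cdot 3^4\cdot 5\cdot 7$ and $|W(E_8)|=2^{14}\cdot 3^5\cdot 5^2\cdot 7$ — so a Sylow $p$-subgroup $P$ is cyclic of order $p$ and $H^*(P,\Z)\cong\Z[u]/(pu)$ with $\deg u=2$. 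By Swan's theorem \cite{swan}, exactly as applied in Lemma \ref{case-symm}, restriction induces $H^k(W,\Z)_{(p)}\cong H^k(P,\Z)^{N_W(P)}$ for $k>0$, where $N_W(P)$ acts through $N_W(P)/C_W(P)\hookrightarrow\Aut(P)\cong(\Z/p)^\times$. Everything therefore reduces to showing that this automizer is \emph{all} of $(\Z/p)^\times$.

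To pin down the automizer I would use a type-$A$ parabolic subgroup that already realizes it. The Coxeter graph of $E_6$ contains $A_4$, that of $E_7$ contains $A_6$, and that of $E_8$ contains $A_7$ as sub-diagrams, so $W$ has parabolic subgroups $W(A_4)\cong\symm_5$, $W(A_6)\cong\symm_7$ and $W(A_7)\cong\symm_8$ respectively. Each of these symmetric groups contains the one-dimensional affine group $\mathrm{AGL}_1(\F_p)=\F_p\rtimes\F_p^\times$ acting on $\F_p$ (with $p=5$, $7$, $7$), whose translation subgroup $\F_p\cong C_p$ is a Sylow $p$-subgroup of the symmetric group — hence, by the order counts above, of $W$ — and on which $\F_p^\times$ acts as the full $\Aut(C_p)$. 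Since the normalizer of $P$ inside the parabolic is contained in $N_W(P)$ and the conjugation actions are compatible, the image of $N_W(P)$ in $\Aut(P)$ is all of $(\Z/p)^\times$.

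With the automizer equal to $(\Z/p)^\times$, each $\lambda\in(\Z/p)^\times$ sends $u^k$ to $\lambda^{\pm k}u^k$, so $H^*(P,\Z)^{N_W(P)}=\Z[u^{p-1}]/(pu^{p-1})$; this vanishes in positive degrees below $2(p-1)$ and equals $\Z/p$ in degree $2(p-1)$. Hence $H^k(W,\Z)_{(p)}=0$ for $1\le k\le 2p-3$, that is, for $1\le k\le 7$ when $p=5$ and for $1\le k\le 11$ when $p=7$, and Lemma \ref{duality} then yields $H_k(W(E_6),\Z_{(5)})=0$ for $1\le k\le 6$ and $H_k(W(E_7),\Z_{(7)})=H_k(W(E_8),\Z_{(7)})=0$ for $1\le k\le 10$. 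I do not expect a genuine obstacle: this is Lemma \ref{case-symm} once more, and the only care needed is the bookkeeping — checking from the Appendix that $p$ exactly divides $|W(E_i)|$ (so that the type-$A$ parabolic carries a full Sylow $p$-subgroup) and that $A_4\subset E_6$, $A_6\subset E_7$ and $A_7\subset E_8$ really are sub-diagrams. One could alternatively read the automizers off from the conjugacy-class data of $W(E_6),W(E_7),W(E_8)$, but the parabolic argument is cleaner and keeps the proof self-contained.
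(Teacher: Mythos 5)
Your argument is correct, but it takes a different route from the paper's. The paper does not redo the Swan computation for the exceptional groups at all: it observes that the cyclic Sylow $p$-subgroup $C_p$ of $W(E_6)$ (resp.\ $W(E_7)$, $W(E_8)$) is already a Sylow $p$-subgroup of the parabolic $W(A_4)$ (resp.\ $W(A_6)$), notes that the injective transfer $H_k(W,\Zp)\to H_k(C_p,\Zp)$ factors through $H_k(W(A_4),\Zp)$ (resp.\ $H_k(W(A_6),\Zp)$), and then quotes Lemma \ref{case-symm} to see that the middle group vanishes in the stated range, forcing the source to vanish. That version uses the type-$A$ case purely as a black box and requires no control over $N_W(C_p)$. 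You instead apply Swan's stable-elements theorem directly to $W(E_6)$, $W(E_7)$, $W(E_8)$, which obliges you to identify the automizer of $C_p$; your device for doing so --- locating $\mathrm{AGL}_1(\F_p)\subset\symm_{p}\subseteq W(A_{p-1})\subseteq W$ so that the full $(\Z/p)^{\times}$ already acts inside a parabolic --- is sound, and all the bookkeeping checks out ($p$ exactly divides each group order except that $5^2\mid |W(E_8)|$, which is irrelevant since you only use $p=7$ there, and $A_4\subset E_6$, $A_6\subset E_7$, $A_7\subset E_8$ are genuine subdiagrams). What your approach buys is slightly more information: you get the exact answer $H^{*}(W,\Z)_{(p)}\cong\Z[u^{p-1}]/(pu^{p-1})$ in positive degrees, hence also non-vanishing in degree $2p-3$ in homology, analogous to Remark \ref{best-possible}. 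What the paper's transfer argument buys is brevity and the avoidance of any normalizer computation in the exceptional groups.
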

\begin{proof}
The Coxeter group $W(A_4)$ is a parabolic subgroup of $W(E_6)$, and they have a common
Sylow $5$-subgroup $C_5$, which is a cyclic group of order $5$. 
The transfer homomorphism to the Sylow $5$-subgroup
$H_k(W(E_6),\Z_{(5)})\rightarrow H_k(C_5,\Z_{(5)})$ is injective and factors 
into a composition
of transfer homomorphisms
\[
H_k(W(E_6),\Z_{(5)})\rightarrow H_k(W(A_4),\Z_{(5)})\rightarrow H_k(C_5,\Z_{(5)}).
\]
In view of Lemma \ref{case-symm}, we conclude that $H_k(W(E_6),\Z_{(5)})=0$ for
$1\leq k\leq 6$, which proves the lemma for $W(E_6)$.
On the other hand, there is a sequence of parabolic subgroups $W(A_6)<W(E_7)<W(E_8)$,
and they have a common Sylow $7$-subgroup $C_7$, which is a cyclic group of order $7$.
The proof of the lemma for $W(E_7)$ and $W(E_8)$ is similar.
\end{proof}

\section{Coxeter groups with vanishing $p$-local homology}\label{sec-vanishing}
In this final section, we introduce some families of Coxeter groups such that
$H_k(W,\Zp)$ vanishes for all $k>0$.

\subsection{Aspherical Coxeter groups}\label{sec:aspherical}
A Coxeter group $W$ is called \emph{aspherical} in \cite{pride-stohr} if,
for all distinct Coxeter generators $s,t,u\in S$, the inequality
\[
\frac{1}{m(s,t)}+\frac{1}{m(t,u)}+\frac{1}{m(u,s)}\leq 1
\]
holds, where $1/\infty=0$ by the convention.
The inequality is equivalent to the condition that the parabolic subgroup
$W_{\{s,t,u\}}$ is of infinite order. The (co)homology groups of aspherical Coxeter
groups were studied by Pride and St{\"o}hr \cite{pride-stohr},
and the mod $2$ cohomology rings of aspherical Coxeter groups were
studied by the author \cite{akita-quillen}.
Among other things,
Pride and St{\"o}hr obtained the following exact sequence
\[
\cdots\rightarrow H_{k+1}(W,A)\rightarrow \bigoplus_{s\in S}H_k(W_{\{s\}},A)^{
\oplus n(s)}
\rightarrow\!\bigoplus_{\substack{\{s,t\}\subset S \\ m(s,t)<\infty}}\!
H_k(W_{\{s,t\}},A)\rightarrow H_k(W,A)\rightarrow\cdots
\]
terminating at $H_2(W,A)$, where $A$ is a $W$-module and $n(s)$ is a certain
nonnegative integer defined for each $s\in S$
\cite[Theorem 5]{pride-stohr}.
Since $W_{\{s\}}\cong\Z/2$, $H_k(W_{\{s\}},\Zp)=0$ for $k>0$.
Moreover, if $p$ does not divide $m(s,t)$, then $H_k(W_{\{s,t\}},\Zp)=0$ for $k>0$ either.
Here no prime numbers $p$ divide $\infty$ by the convention.
Hence we obtain the following result
(the statement for $k=1,2$ follows from Corollary  \ref{low-p-homology}):
\begin{pro}
For any aspherical Coxeter groups $W$, we have
\[
H_k(W,\Zp)\cong 
\bigoplus_{\substack{\{s,t\}\subset S \\ p\,|\,m(s,t)}}
H_k(W_{\{s,t\}},\Zp)
\]
for all $k>0$. Furthermore, $H_k(W,\Zp)$ vanishes for all $k>0$
if and only if $W$ is $p$-free.
\end{pro}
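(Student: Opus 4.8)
The plan is to substitute the trivial module $A=\Zp$ into the Pride--St{\"o}hr exact sequence displayed above and to observe that almost all of it collapses. Since $p$ is odd, each $W_{\{s\}}\cong\Z/2$ satisfies $H_j(W_{\{s\}},\Zp)=0$ for every $j\geq 1$, so all the ``vertex'' terms $\bigoplus_{s}H_j(W_{\{s\}},\Zp)^{\oplus n(s)}$ vanish in positive degrees. For a pair $\{s,t\}$ with $m(s,t)<\infty$, the parabolic $W_{\{s,t\}}\cong W(I_2(m(s,t)))$ is the dihedral group of order $2m(s,t)$; if moreover $p\nmid m(s,t)$, this group has order prime to $p$, whence $H_j(W_{\{s,t\}},\Zp)=0$ for all $j>0$ (the same observation made for $W(I_2(q))$ in \S\ref{sec-proof}). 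Consequently, in each positive degree the ``edge'' term $\bigoplus_{\{s,t\},\,m(s,t)<\infty}H_j(W_{\{s,t\}},\Zp)$ reduces to the subsum over the pairs with $p\mid m(s,t)$, and every such pair automatically has $m(s,t)<\infty$.

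With these vanishings in hand I would argue as follows. For $k=1,2$ both sides of the asserted isomorphism are zero: the left-hand side by Corollary~\ref{low-p-homology} applied to $W$, and the right-hand side by the same corollary applied to each parabolic $W_{\{s,t\}}$, which is again a Coxeter group. For $k\geq 3$, the pertinent portion of the Pride--St{\"o}hr sequence is
\[
\bigoplus_{s\in S}H_k(W_{\{s\}},\Zp)^{\oplus n(s)}
\to
\bigoplus_{\substack{\{s,t\}\subset S\\ m(s,t)<\infty}} H_k(W_{\{s,t\}},\Zp)
\to H_k(W,\Zp)
\to
\bigoplus_{s\in S}H_{k-1}(W_{\{s\}},\Zp)^{\oplus n(s)},
\]
and both outer groups vanish, being vertex terms in degrees $k,k-1\geq 1$. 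Hence the middle arrow is an isomorphism, and combining this with the reduction of the edge term yields $H_k(W,\Zp)\cong\bigoplus_{\{s,t\}\subset S,\ p\mid m(s,t)}H_k(W_{\{s,t\}},\Zp)$, which is the first assertion.

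The equivalence is then read off directly. If $W$ is $p$-free, no pair $\{s,t\}$ satisfies $p\mid m(s,t)$, so the index set of the direct sum is empty and $H_k(W,\Zp)=0$ for every $k>0$. Conversely, if $W$ is not $p$-free, choose $s,t\in S$ with $p\mid m(s,t)$; then $W_{\{s,t\}}$ is a dihedral group whose order $2m(s,t)$ is divisible by $p$, and by \cite{MR1877725}*{Theorem 2.1} we have $H_k(W_{\{s,t\}},\Zp)\neq 0$ whenever $k\equiv 3\pmod{4}$. Every such $k$ satisfies $k\geq 3$, so $H_k(W_{\{s,t\}},\Zp)$ occurs as a direct summand of $H_k(W,\Zp)$ by the first part, and therefore $H_k(W,\Zp)\neq 0$.

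I do not expect a genuine obstacle here: once the Pride--St{\"o}hr sequence is available, the argument is essentially bookkeeping. The only points demanding some care are the low-degree range $k=1,2$, where the sequence ``terminates at $H_2$'' and so does not itself produce the isomorphism, forcing a detour through Howlett's computation via Corollary~\ref{low-p-homology}, and the non-elementary input to the ``only if'' direction, namely the non-vanishing of the $p$-local homology of a dihedral group of order divisible by $p$; for the latter we can simply quote \cite{MR1877725}, although it could instead be recovered by the Sylow-and-stable-elements computation used in the proof of Lemma~\ref{case-symm}.
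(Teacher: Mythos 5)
Your proposal is correct and follows essentially the same route as the paper: specialize the Pride--St\"ohr exact sequence to $A=\Zp$, kill the vertex terms (and the edge terms with $p\nmid m(s,t)$) in positive degrees to get the isomorphism for $k\geq 3$, dispose of $k=1,2$ via Corollary~\ref{low-p-homology}, and deduce the ``only if'' direction from the non-vanishing of $H_k(W_{\{s,t\}},\Zp)$ for $k\equiv 3\pmod 4$ when $p\mid m(s,t)$. The only difference is that you spell out the exactness bookkeeping that the paper leaves implicit.
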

Note that if $p$ divides $m(s,t)$, then
\[
H_k(W_{\{s,t\}},\Zp)\cong
\begin{cases}(\Z/m(s,t))_{(p)} & k\equiv 3\pmod{4} \\
0 & k\not\equiv 3\pmod{4}
\end{cases}
\]
for $k>0$, where $\Z/m(s,t)$ is the cyclic group of order $m(s,t)$
(see \cite{MR1877725}*{Theorem 2.1}).
\subsection{Coxeter groups without $p$-torsion}
Next we prove vanishing of the $p$-local homology of Coxeter groups without $p$-torsion.
Before doing so, we characterize such Coxeter groups in terms of their finite
parabolic subgroups.
\begin{pro}\label{p-tor}
Let $p$ be a prime number.
A Coxeter group $W$ has no $p$-torsion
if and only if every finite parabolic subgroup
has no $p$-torsion.
\end{pro}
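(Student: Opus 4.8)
The plan is to reduce the statement to a standard fact about torsion in Coxeter groups, namely that every finite-order element of $W$ lies in some finite parabolic subgroup. One direction is immediate: if $W$ has no $p$-torsion, then no subgroup of $W$ — in particular no finite parabolic subgroup — can have $p$-torsion. So the content is in the converse. First I would recall the classical result (see \cite{bourbaki}, Ch.~V, or \cite{davis-book}, Theorem~12.3.4) that every element $w\in W$ of finite order is conjugate in $W$ to an element of some \emph{finite} parabolic subgroup $W_T$ ($T\subseteq S$); equivalently, every finite subgroup of $W$ is conjugate into a finite parabolic subgroup. Granting this, suppose every finite parabolic subgroup of $W$ has no $p$-torsion, and let $w\in W$ have order $p$. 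Then $\langle w\rangle$ is conjugate to a subgroup of some finite $W_T$, forcing $W_T$ to contain an element of order $p$ — contradicting the hypothesis. Hence $W$ has no $p$-torsion.

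The one step that genuinely requires care is the assertion that finite-order elements are conjugate into finite parabolics; I would cite this rather than reprove it, since it follows from the Davis-complex picture (a finite subgroup fixes a point of the CAT(0) Davis complex, and point stabilizers are exactly conjugates of finite parabolic subgroups) or, in the geometric-representation setting, from the Tits cone argument in \cite{bourbaki}. It is worth noting explicitly, for the reader, that a finite parabolic subgroup $W_T$ is itself a Coxeter group whose defining graph is the full subgraph of $\Gamma$ on $T$, so "finite parabolic subgroup" can be read off combinatorially from $\Gamma$ via the classification in \S\ref{subsec-defn}; this makes the criterion effectively checkable.

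The main obstacle, such as it is, is purely expository: deciding how much of the "finite subgroups are subconjugate to finite parabolics" statement to include. I expect to state it as a lemma with a reference and a one-line indication of proof (fixed-point property on the Davis complex together with the identification of cell stabilizers with conjugates of $W_T$, $T$ spherical), and then the proposition is a two-line deduction. No spectral sequences or homological input are needed here — this proposition is a group-theoretic preliminary whose sole purpose is to let the subsequent homological vanishing statements (via Proposition~\ref{p-tor-coho} and the transfer to a Sylow $p$-subgroup) be phrased in terms of parabolic subgroups.
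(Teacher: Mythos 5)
Your proposal is correct and follows essentially the same route as the paper: the author likewise reduces the converse direction to the result of Tits that every finite subgroup of $W$ is contained in a conjugate of some finite parabolic subgroup, citing \cite{davis-book}*{Corollary D.2.9}, and then concludes immediately. Your additional remarks on where to find the subconjugacy result and on its combinatorial checkability are fine but not needed.
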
 
\begin{proof}
According to a result of Tits, 
every finite subgroup of $W$ is contained in 
conjugate of some parabolic subgroup of finite order
(see \cite[Corollary D.2.9]{davis-book}).
The proposition follows at once.
\end{proof}
\begin{pro}\label{p-tor-coho}
If $W$ is a Coxeter group without $p$-torsion, then
$H_k(W,\Zp)=0$ for all $k>0$.
\end{pro}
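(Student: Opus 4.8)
The plan is to run the \emph{unbounded} analogue of the proof of Claim \ref{claim2}, using the absence of $p$-torsion in place of the rank restriction. I would argue by induction on $\rank W=|S|$, the case $|S|=0$ (so $W=\{1\}$) being vacuous, and split the inductive step according to whether $W$ is finite or infinite.

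If $W$ is finite, then having no $p$-torsion means $p\nmid|W|$, so $|W|$ is a unit of $\Zp$. Since $|W|$ annihilates $H_k(W,\Zp)$ for every $k>0$ (the standard fact that the order of a finite group annihilates its positive-degree homology, see \cite{brown}*{III.10}), the module $H_k(W,\Zp)$ is killed by a unit, hence is $0$. (Equivalently, $H_k(W,\Z)$ is a finite group of order prime to $p$ by Proposition \ref{finite-abel}, so its $p$-primary component vanishes.)

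If $W$ is infinite, then $X_W$ is contractible by Proposition \ref{serre-cont}, so Proposition \ref{compare0} reduces the claim to showing $H_k^W(X_W,\Zp)=0$ for $k>0$. I would feed this into the Leray spectral sequence (\ref{leray}). Every parabolic subgroup $W_T$ with $T\subsetneq S$ is itself a Coxeter group, of rank $|T|<|S|$, and, being a subgroup of $W$, it has no $p$-torsion; so the induction hypothesis gives $H_j(W_T,\Zp)=0$ for all $j>0$, whence $E^1_{i,j}=E^2_{i,j}=0$ whenever $j>0$. Combined with $E^2_{i,0}=0$ for $i\neq 0$ from Lemma \ref{base-term}, the $E^2$-page is concentrated at $(0,0)$, and therefore $H_k^W(X_W,\Zp)=0$ for all $k>0$, as required.

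I do not anticipate a real obstacle: this is the ``no upper bound'' version of Claim \ref{claim2}, and the only things to verify are routine — that a subgroup of a $p$-torsion-free group is again $p$-torsion-free (so the inductive hypothesis genuinely applies to every proper parabolic $W_T$ occurring in (\ref{leray})), and that Lemma \ref{base-term} and Proposition \ref{compare0} are available verbatim. If there is any delicate point at all it is purely organizational, namely arranging the induction on rank so that it simultaneously handles the finite and infinite parabolic subgroups $W_T$ that appear on the $E^1$-page.
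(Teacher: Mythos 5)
Your proof is correct and takes essentially the same route as the paper's: the paper dismisses the finite case as obvious (your annihilation-by-$|W|$ argument is the standard justification) and then runs the same induction on $|S|$ for infinite $W$ using the spectral sequence (\ref{leray}), Lemma \ref{base-term}, and Proposition \ref{compare0}.
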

\begin{proof}
The claim is obvious for finite Coxeter groups.
We prove the proposition for infinite Coxeter groups by the induction on $|S|$.
Let $W$ be an infinite Coxeter group without $p$-torsion and consider the
spectral sequence (\ref{leray}).
Every proper parabolic subgroup $W_T$ of $W$ has no $p$-torsion,
and hence $H_k(W_T,\Zp)=0$ $(k>0)$ by the induction assumption.
This implies $E^1_{i,j}=0$ for $j\not=0$. Moreover
$E^2_{i,j}=0$ for $(i,j)\not=(0,0)$ by Lemma \ref{base-term},
which proves the proposition.
\end{proof}
In view of the last proposition, the direct sum decomposition
(\ref{direct-sum}) can be replaced to the following:
\begin{cor}\label{cor-decomp} 
For any Coxeter groups $W$ and $k>0$, we have
\[
H_k(W,\Z)\cong\bigoplus_p H_k(W,\Zp),
\]
where $p$ runs prime numbers such that $W$ has $p$-torsion.
\end{cor}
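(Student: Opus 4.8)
The plan is to derive the corollary formally from the decomposition already recorded in the introduction together with Proposition \ref{p-tor-coho}. Recall that by Proposition \ref{finite-abel} the group $H_k(W,\Z)$ is a finite abelian group for every $k>0$, hence it is the direct sum of its $p$-primary components $H_k(W,\Z)_{(p)}$, and all but the finitely many components with $p\mid |H_k(W,\Z)|$ vanish. Combining this with the universal coefficient isomorphism (\ref{univ-p-local}), namely $H_k(W,\Z)_{(p)}\cong H_k(W,\Zp)$, one obtains
\[
H_k(W,\Z)\cong\bigoplus_p H_k(W,\Zp),
\]
the sum taken over all primes $p$, with only finitely many nonzero summands; this is just (\ref{direct-sum}) rewritten via (\ref{univ-p-local}).

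It then remains to identify the set of primes that genuinely contribute. Here I would invoke Proposition \ref{p-tor-coho}: if $W$ has no $p$-torsion, then $H_k(W,\Zp)=0$ for all $k>0$. Consequently every summand indexed by a prime at which $W$ is torsion-free already vanishes, so discarding such indices changes nothing; equivalently, one may take the index set to consist precisely of the primes $p$ at which $W$ has $p$-torsion (including, harmlessly, any such prime whose summand happens to vanish in the given degree $k$). This yields the asserted formula.

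There is no real obstacle: the statement is a bookkeeping refinement of (\ref{direct-sum}), and every ingredient --- finiteness of $H_k(W,\Z)$, the universal coefficient theorem, and the vanishing Proposition \ref{p-tor-coho} --- is already in place. The only point worth stating explicitly is that the direct sum on the right is finite (guaranteed since $H_k(W,\Z)$ itself is finite), so that the isomorphism is an honest finite direct sum decomposition and requires no convergence or completeness considerations.
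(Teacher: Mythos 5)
Your proposal is correct and matches the paper's own argument: the paper likewise obtains the corollary by combining the decomposition (\ref{direct-sum}) with the universal coefficient isomorphism (\ref{univ-p-local}) and then invoking Proposition \ref{p-tor-coho} to discard the primes at which $W$ is torsion-free. Nothing is missing.
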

\begin{rem}
Proposition \ref{p-tor-coho} and Corollary \ref{cor-decomp}
should be compared with the following general results.
Namely, suppose that $\Gamma$ is a group having 
finite virtual cohomological dimension $\vcd\,\Gamma$.
If $\Gamma$ does not have $p$-torsion, then $H^k(\Gamma,\Z)_{(p)}=0$
for $k>\vcd\,\Gamma$.
Consequently, we have the finite direct product decomposition
\[
H^k(\Gamma,\Z)\cong\prod_p H^k(\Gamma,\Z)_{(p)}
\]
which holds for $k>\vcd\,\Gamma$, 
where $p$ ranges over the prime numbers such that
$\Gamma$ has $p$-torsion. See \cite[Chapter X]{brown}.
\end{rem}

\subsection{Right-angled Coxeter groups}
A Coxeter group is called \emph{right-angled} if $m(s,t)=2$ or $\infty$
for all distinct $s,t\in S$. The mod $2$ cohomology rings of
right-angled Coxeter groups were determined by Rusin \cite{rusin-thesis} 
(see also \cite[Theorem 15.1.4]{davis-book}).
In this section, we prove vanishing of $p$-local homology for a class of Coxeter groups
which includes right-angled Coxeter groups.

\begin{pro}
If\, $W$ is a Coxeter group such that $m(s,t)$ equals to the power of 2 or $\infty$
for all distinct $s,t\in S$, 
then $H_k(W,\Zp)=0$ $(k>0)$ for all odd prime numbers $p\geq 3$.
\end{pro}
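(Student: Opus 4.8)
The strategy is to show that such a $W$ has no $p$-torsion for any odd prime $p$ and then to invoke Proposition~\ref{p-tor-coho}. By Proposition~\ref{p-tor}, the absence of $p$-torsion in $W$ will follow as soon as we prove that every finite parabolic subgroup of $W$ is a $2$-group, and that is the one point that requires an argument.

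So let $W_T$ $(T\subseteq S)$ be a finite parabolic subgroup. Then $(W_T,T)$ is a Coxeter system with the same values $m(s,t)$ for $s,t\in T$. None of these labels can be $\infty$, since then $W_{\{s,t\}}$, and hence $W_T$, would be infinite; thus every $m(s,t)$ with $s\neq t$ in $T$ is a finite power of $2$. Decomposing $W_T$ into the direct product of its irreducible factors, each factor is a finite irreducible Coxeter group all of whose Coxeter graph labels are powers of $2$.

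To finish, I would inspect the classification of finite irreducible Coxeter groups recalled in \S\ref{subsec-defn}. Every graph of type $A_n$ $(n\geq 2)$, $D_n$, $E_6$, $E_7$, $E_8$, $F_4$, $H_3$ or $H_4$ carries a label $3$ or $5$; the graph $B_n$ carries a label $3$ once $n\geq 3$, while $B_2=I_2(4)$; and the graph $I_2(q)$ has its unique label equal to $q$. Hence the only finite irreducible Coxeter groups whose labels are all powers of $2$ are $W(A_1)$, of order $2$, and the dihedral groups $W(I_2(2^j))$ with $j\geq 2$, of order $2^{j+1}$. All of these are $2$-groups, so $W_T$, being a direct product of such groups, is a $2$-group as well. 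In particular $W_T$ has no $p$-torsion for any odd prime $p$, so $W$ has no $p$-torsion, and the proposition follows from Proposition~\ref{p-tor-coho}.

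The only genuine content of the argument is this finite case-check against the classification list; everything else is formal, and the right-angled Coxeter groups are recovered as the special case in which all finite labels equal $2$.
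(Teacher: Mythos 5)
Your proposal is correct and follows essentially the same route as the paper: identify the finite irreducible Coxeter groups all of whose labels are powers of $2$ (which, since $B_2=I_2(4)$, are exactly $W(A_1)$ and the $W(I_2(2^j))$), conclude every finite parabolic subgroup is a $2$-group, and then apply Propositions \ref{p-tor} and \ref{p-tor-coho}. The extra care you take in ruling out the label $\infty$ inside a finite parabolic subgroup is a point the paper leaves implicit, but the argument is the same.
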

\begin{proof}
The finite irreducible Coxeter groups satisfying the assumption 
are $W(A_1)$ (of order $2$), $W(B_2)$ (of order $8$), and 
$W(I_2(2^m))$'s (of order $2^{m+1}$).
Every finite parabolic subgroup of $W$ is isomorphic to a direct product
of copies of those groups and hence has the order the power of $2$.
Consequently, $W$ has no $p$-torsion by Proposition \ref{p-tor}.
Now the proposition follows from Proposition \ref{p-tor-coho}.
\end{proof}

\subsection{$3$-free Coxeter groups}\label{sec-3-free}
In this final section, we look into situations for $p=3$ more closely.
Firstly, according to Corollary \ref{low-p-homology}, 
$H_1(W,\Z_{(3)})=H_2(W,\Z_{(3)})=0$ for any Coxeter groups $W$.
This means that Theorem \ref{main-thm} remains true for $p=3$ without
$3$-freeness assumption.
On the other hand, the finite irreducible $3$-free Coxeter groups are
$W(A_1),W(B_2)$ and $W(I_2(q))$ such that $q$ is prime to $3$, all of which
have no $3$-torsion. Consequently, every $3$-free Coxeter group has
no $3$-torsion by Proposition \ref{p-tor}. Applying Proposition \ref{p-tor-coho}
we obtain the following result:
\begin{pro}\label{pro-3-free} 
For every $3$-free Coxeter group,
$H_k(W,\Z_{(3)})=0$ holds for all $k>0$.
\end{pro}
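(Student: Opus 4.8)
The plan is to reduce everything to results already established in the excerpt. The statement concerns $3$-free Coxeter groups, and the decisive observation is that the list of finite irreducible $3$-free Coxeter groups is extremely short. First I would recall that a $3$-free Coxeter group $W$ has, by definition, $m(s,t)$ prime to $3$ for all $s,t \in S$. Consequently any finite irreducible parabolic subgroup of $W$ must itself be $3$-free, so I need to scan the classification of finite irreducible Coxeter groups ($A_n$, $B_n$, $D_n$, $I_2(q)$, and the exceptionals $E_6,E_7,E_8,F_4,H_3,H_4$) and retain only those whose Coxeter graph has no label divisible by $3$ and, crucially, whose order is divisible by $3$ in no hidden way. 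Using the table of orders in the Appendix, the finite irreducible $3$-free groups turn out to be exactly $W(A_1)$ (order $2$), $W(B_2)$ (order $8$), and the dihedral groups $W(I_2(q))$ with $q$ prime to $3$ (order $2q$); every $W(A_n)$ with $n \geq 2$ contains $W(A_2) = \symm_3$ of order $6$ and is therefore not $3$-free, and all the other families and exceptionals have order divisible by $3$. The key point is that none of $W(A_1)$, $W(B_2)$, $W(I_2(q))$ with $3 \nmid q$ has $3$-torsion, since their orders are $2$, $8$, $2q$ respectively and $3 \nmid 2q$.

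Next I would invoke Proposition \ref{p-tor}: a Coxeter group has no $3$-torsion if and only if every finite parabolic subgroup has no $3$-torsion. Every finite parabolic subgroup of $W$ decomposes as a direct product of finite irreducible Coxeter groups, each of which is $3$-free (being a parabolic subgroup of the $3$-free group $W$), hence each factor is one of the three types listed above and has order prime to $3$; therefore the product has order prime to $3$ and carries no $3$-torsion. By Proposition \ref{p-tor} it follows that $W$ itself has no $3$-torsion. Finally, Proposition \ref{p-tor-coho} applied with $p = 3$ gives $H_k(W,\Z_{(3)}) = 0$ for all $k > 0$, which is exactly the assertion of the proposition.

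The only place requiring genuine care is the verification that the enumeration of finite irreducible $3$-free Coxeter groups is complete and correct — in particular remembering that $3$-freeness of $W$ forces $3$-freeness of all its parabolic subgroups, so that, e.g., one cannot have $W(A_3)$ as a parabolic without already having the $3$-torsion element of $W(A_2)$. This is a finite check against the Appendix table and the overlaps $A_2 = I_2(3)$, $B_2 = I_2(4)$ noted in \S\ref{subsec-defn}; no analytic or spectral-sequence input beyond what is cited above is needed. So the main (and essentially only) obstacle is bookkeeping with the classification, after which the proof is a two-line deduction from Propositions \ref{p-tor} and \ref{p-tor-coho}.
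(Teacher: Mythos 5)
Your proof is correct and follows essentially the same route as the paper: enumerate the finite irreducible $3$-free Coxeter groups as $W(A_1)$, $W(B_2)$, and $W(I_2(q))$ with $q$ prime to $3$, note that none has $3$-torsion, conclude via Proposition \ref{p-tor} that $W$ has no $3$-torsion, and finish with Proposition \ref{p-tor-coho}. No gaps.
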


\section*{Appendix}
The following is the table for the Coxeter graph $\Gamma$, 
the order $|W(\Gamma)|$ of the corresponding 
Coxeter group $W(\Gamma)$, the order $|W(\Gamma)|$ factored into primes, and
the range of odd prime numbers $p$ such that $W(\Gamma)$ is $p$-free.
\[
\begin{array}{cccc}
\Gamma & |W(\Gamma)| && \text{$p$-freeness} \\ \hline
A_1 & 2 && p\geq 3 \\
A_n\,(n\geq 2) & (n+1)! && p\geq 5 \\
B_2 & 8 && p\geq 3\\
B_n\,(n\geq 3) & 2^n n! && p\geq 5\\
D_n\,(n\geq 4) & 2^{n-1} n! && p\geq 5\\
E_6 & 72\cdot 6! & 2^7\cdot 3^4\cdot 5 & p\geq 5 \\
E_7 & 72\cdot 8! & 2^{10}\cdot 3^4\cdot 5\cdot 7 & p\geq 5 \\
E_8 & 192\cdot 10! & 2^{14}\cdot 3^5\cdot 5^2\cdot 7 & p\geq 5 \\
F_4 & 1152 & 2^7\cdot 3^2 & p\geq 5 \\
H_3 & 120 & 2^3\cdot 3\cdot 5 & p\geq 7 \\
H_4 & 14400 & 2^6\cdot 3^2\cdot 5^2 & p\geq 7 \\
I_2(q)\,(q\geq 3) & 2q && p\not|\ q
\end{array}
\]

\begin{ack}
This study started from questions posed by Takefumi Nosaka
concerning of the third $p$-local homology of Coxeter groups for
odd prime numbers $p$.
The author thanks to him for drawing our attention.
This study was partially supported by JSPS KAKENHI Grant Numbers
23654018 and 26400077.
\end{ack}

\begin{bibdiv}
\begin{biblist}

\bib{book-building}{book}{
   author={Abramenko, Peter},
   author={Brown, Kenneth S.},
   title={Buildings},
   series={Graduate Texts in Mathematics},
   volume={248},
   note={Theory and applications},
   publisher={Springer},
   place={New York},
   date={2008},
   pages={xxii+747},
   isbn={978-0-387-78834-0},
   review={\MR{2439729 (2009g:20055)}},
   doi={10.1007/978-0-387-78835-7},
}

\bib{MR1334713}{article}{
   author={Akita, Toshiyuki},
   title={On the cohomology of Coxeter groups and their finite parabolic
   subgroups},
   journal={Tokyo J. Math.},
   volume={18},
   date={1995},
   number={1},
   pages={151--158},
   issn={0387-3870},
   review={\MR{1334713 (96f:20078)}},
   doi={10.3836/tjm/1270043616},
}

\bib{MR1603123}{article}{
   author={Akita, Toshiyuki},
   title={On the cohomology of Coxeter groups and their finite parabolic
   subgroups. II},
   conference={
      title={Group representations: cohomology, group actions and topology},
      address={Seattle, WA},
      date={1996},
   },
   book={
      series={Proc. Sympos. Pure Math.},
      volume={63},
      publisher={Amer. Math. Soc., Providence, RI},
   },
   date={1998},
   pages={1--5},
   review={\MR{1603123 (98m:20066)}},
   doi={10.1090/pspum/063/1603123},
}

\bib{akita-quillen}{article}{
   author={Akita, Toshiyuki},
   title={Aspherical Coxeter groups that are Quillen groups},
   journal={Bull. London Math. Soc.},
   volume={32},
   date={2000},
   number={1},
   pages={85--90},
   issn={0024-6093},
   review={\MR{1718721 (2000j:20068)}},
   doi={10.1112/S0024609399006414},
}

\bib{a-euler}{article}{
   author={Akita, Toshiyuki},
   title={Euler characteristics of Coxeter groups, PL-triangulations of
   closed manifolds, and cohomology of subgroups of Artin groups},
   journal={J. London Math. Soc. (2)},
   volume={61},
   date={2000},
   number={3},
   pages={721--736},
   issn={0024-6107},
   review={\MR{1766100 (2001f:20080)}},
   doi={10.1112/S0024610700008693},
}

\bib{MR2378355}{book}{
   author={Benson, David J.},
   author={Smith, Stephen D.},
   title={Classifying spaces of sporadic groups},
   series={Mathematical Surveys and Monographs},
   volume={147},
   publisher={American Mathematical Society},
   place={Providence, RI},
   date={2008},
   pages={xvi+289},
   isbn={978-0-8218-4474-8},
   review={\MR{2378355 (2009f:55017)}},
}

\bib{bourbaki}{book}{
   author={Bourbaki, Nicolas},
   title={\'El\'ements de math\'ematique},
   language={French},
   note={Groupes et alg\`ebres de Lie. Chapitres 4, 5 et 6. [Lie groups and
   Lie algebras. Chapters 4, 5 and 6]},
   publisher={Masson},
   place={Paris},
   date={1981},
   pages={290},
   isbn={2-225-76076-4},
   review={\MR{647314 (83g:17001)}},
}

\bib{brown}{book}{
   author={Brown, Kenneth S.},
   title={Cohomology of groups},
   series={Graduate Texts in Mathematics},
   volume={87},
   publisher={Springer-Verlag},
   place={New York},
   date={1982},
   pages={x+306},
   isbn={0-387-90688-6},
   review={\MR{672956 (83k:20002)}},
}

\bib{davis-book}{book}{
   author={Davis, Michael W.},
   title={The geometry and topology of Coxeter groups},
   series={London Mathematical Society Monographs Series},
   volume={32},
   publisher={Princeton University Press},
   place={Princeton, NJ},
   date={2008},
   pages={xvi+584},
   isbn={978-0-691-13138-2},
   isbn={0-691-13138-4},
   review={\MR{2360474 (2008k:20091)}},
}

\bib{MR1764318}{article}{
   author={De Concini, C.},
   author={Salvetti, M.},
   title={Cohomology of Coxeter groups and Artin groups},
   journal={Math. Res. Lett.},
   volume={7},
   date={2000},
   number={2-3},
   pages={213--232},
   issn={1073-2780},
   review={\MR{1764318 (2001f:20118)}},
   doi={10.4310/MRL.2000.v7.n2.a7},
}

\bib{MR1446573}{article}{
   author={Errokh, Mohamed},
   author={Grazzini, Fulvio},
   title={Sur la cohomologie modulo $2$ des groupes de Coxeter \`a trois
   g\'en\'erateurs},
   language={French, with English and French summaries},
   journal={C. R. Acad. Sci. Paris S\'er. I Math.},
   volume={324},
   date={1997},
   number={7},
   pages={741--745},
   issn={0764-4442},
   review={\MR{1446573 (98h:20094)}},
   doi={10.1016/S0764-4442(97)86937-6},
}

\bib{MR777684}{book}{
   author={Grove, L. C.},
   author={Benson, C. T.},
   title={Finite reflection groups},
   series={Graduate Texts in Mathematics},
   volume={99},
   edition={2},
   publisher={Springer-Verlag, New York},
   date={1985},
   pages={x+133},
   isbn={0-387-96082-1},
   review={\MR{777684 (85m:20001)}},
}

\bib{howlett}{article}{
   author={Howlett, Robert B.},
   title={On the Schur multipliers of Coxeter groups},
   journal={J. London Math. Soc. (2)},
   volume={38},
   date={1988},
   number={2},
   pages={263--276},
   issn={0024-6107},
   review={\MR{966298 (90e:20010)}},
   doi={10.1112/jlms/s2-38.2.263},
}

\bib{humphreys}{book}{
   author={Humphreys, James E.},
   title={Reflection groups and Coxeter groups},
   series={Cambridge Studies in Advanced Mathematics},
   volume={29},
   publisher={Cambridge University Press},
   place={Cambridge},
   date={1990},
   pages={xii+204},
   isbn={0-521-37510-X},
   review={\MR{1066460 (92h:20002)}},
}

\bib{ihara-yokonuma}{article}{
   author={Ihara, Shin-ichiro},
   author={Yokonuma, Takeo},
   title={On the second cohomology groups (Schur-multipliers) of finite
   reflection groups},
   journal={J. Fac. Sci. Univ. Tokyo Sect. I},
   volume={11},
   date={1965},
   pages={155--171 (1965)},
   issn={0040-8980},
   review={\MR{0190232 (32 \#7646a)}},
}

\bib{nakaoka}{article}{
   author={Nakaoka, Minoru},
   title={Decomposition theorem for homology groups of symmetric groups},
   journal={Ann. of Math. (2)},
   volume={71},
   date={1960},
   pages={16--42},
   issn={0003-486X},
   review={\MR{0112134 (22 \#2989)}},
}

\bib{nakaoka2}{article}{
   author={Nakaoka, Minoru},
   title={Homology of the infinite symmetric group},
   journal={Ann. of Math. (2)},
   volume={73},
   date={1961},
   pages={229--257},
   issn={0003-486X},
   review={\MR{0131874 (24 \#A1721)}},
}

\bib{pride-stohr}{article}{
   author={Pride, Stephen J.},
   author={St{\"o}hr, Ralph},
   title={The (co)homology of aspherical Coxeter groups},
   journal={J. London Math. Soc. (2)},
   volume={42},
   date={1990},
   number={1},
   pages={49--63},
   issn={0024-6107},
   review={\MR{1078174 (91k:20058)}},
   doi={10.1112/jlms/s2-42.1.49},
}

\bib{rusin-thesis}{book}{
   author={Rusin, David John},
   title={The cohomology of groups generated by involutions},
   note={Thesis (Ph.D.)--The University of Chicago},
   publisher={ProQuest LLC, Ann Arbor, MI},
   date={1984},
   pages={(no paging)},
   review={\MR{2611843}},
}

\bib{MR1877725}{article}{
   author={Salvetti, Mario},
   title={Cohomology of Coxeter groups},
   note={Arrangements in Boston: a Conference on Hyperplane Arrangements
   (1999)},
   journal={Topology Appl.},
   volume={118},
   date={2002},
   number={1-2},
   pages={199--208},
   issn={0166-8641},
   review={\MR{1877725 (2003d:20073)}},
   doi={10.1016/S0166-8641(01)00051-7},
}

\bib{serre}{article}{
   author={Serre, Jean-Pierre},
   title={Cohomologie des groupes discrets},
   language={French},
   conference={
      title={Prospects in mathematics (Proc. Sympos., Princeton Univ.,
      Princeton, N.J., 1970)},
   },
   book={
      publisher={Princeton Univ. Press},
      place={Princeton, N.J.},
   },
   date={1971},
   pages={77--169. Ann. of Math. Studies, No. 70},
   review={\MR{0385006 (52 \#5876)}},
}

\bib{swan}{article}{
   author={Swan, Richard G.},
   title={The $p$-period of a finite group},
   journal={Illinois J. Math.},
   volume={4},
   date={1960},
   pages={341--346},
   issn={0019-2082},
   review={\MR{0122856 (23 \#A188)}},
}

\bib{MR2709085}{book}{
   author={Swenson, James Andrew},
   title={The mod-2 cohomology of finite Coxeter groups},
   note={Thesis (Ph.D.)--University of Minnesota},
   publisher={ProQuest LLC, Ann Arbor, MI},
   date={2006},
   pages={187},
   isbn={978-0542-81522-5},
   review={\MR{2709085}},
}

\bib{thomas-book}{book}{
   author={Thomas, C. B.},
   title={Characteristic classes and the cohomology of finite groups},
   series={Cambridge Studies in Advanced Mathematics},
   volume={9},
   publisher={Cambridge University Press, Cambridge},
   date={1986},
   pages={xii+129},
   isbn={0-521-25661-5},
   review={\MR{878978 (88f:20005)}},
}

\bib{yokonuma}{article}{
   author={Yokonuma, Takeo},
   title={On the second cohomology groups (Schur-multipliers) of infinite
   discrete reflection groups},
   journal={J. Fac. Sci. Univ. Tokyo Sect. I},
   volume={11},
   date={1965},
   pages={173--186 (1965)},
   issn={0040-8980},
   review={\MR{0190233 (32 \#7646b)}},
}

\end{biblist}
\end{bibdiv}

\end{document}